
\documentclass[a4paper, 11pt]{amsart}



\usepackage{amssymb,amsbsy,amsmath,amsfonts,amssymb,amscd}
\usepackage{latexsym}
\usepackage{amsxtra}
\usepackage{amscd}
\usepackage{graphics}

\usepackage{epic}
\usepackage{color}
\input xy
\xyoption{all}

\usepackage{ mathrsfs, amsfonts}

\numberwithin{equation}{section}

\makeindex

\usepackage[all]{xy}
\usepackage{amssymb}

\usepackage{amsmath}

\usepackage{amsthm}

\usepackage{amscd}

\usepackage{amsfonts}

\usepackage{amssymb}


\newcommand{\calO}{\mathcal{O}}

\newcommand{\bbC}{\mathbb{C}}

\newcommand{\bbP}{\mathbb{P}}

\newcommand{\SU}{\textup{SU}}

\newcommand{\Aut}{\textup{Aut}}

\newcommand\Pic{{\text{Pic}}}
\newcommand\Ker{{\text{Ker}}}

\newcommand\om{\omega}

\newcommand{\CC}{\ensuremath{\mathbb{C}}}

\newcommand{\ZZ}{\ensuremath{\mathbb{Z}}}

\newcommand{\hol}{\ensuremath{\mathcal{O}}}

\newcommand\Ga{\Gamma}

\newcommand\ga{\gamma}

\newcommand{\sD}{{\mathcal D}}

\newcommand{\sI}{{\mathcal I}}

\newcommand\Tr{{\textup{Tr}}}


\newtheorem{theorem}{Theorem}[section]
\newtheorem{lemma}[theorem]{Lemma}
\newtheorem{proposition}[theorem]{Proposition}

\newtheorem{corollary}[theorem]{Corollary}
\theoremstyle{definition}     

\newtheorem{example}[theorem]{Example}

\theoremstyle{remark}
\newtheorem{remark}[theorem]{Remark}
\numberwithin{equation}{section}



\newcommand{\beq}{\begin{equation}}
\newcommand{\eeq}{\end{equation}}

\title[Bicanonical map of fake projective planes]
{The Bicanonical map of fake projective planes with an automorphism}

\author[F.Catanese]{Fabrizio Catanese}
\author[J. Keum]{JongHae Keum}
\address {Lehrstuhl Mathematik VIII\\
Mathematisches Institut der Universit\"at Bayreuth\\
NW II,  Universit\"atsstr. 30\\
95447 Bayreuth, Germany}
\email{fabrizio.catanese@uni-bayreuth.de}
\address{School of Mathematics, Korea Institute for Advanced
Study, Hoegiro 85, Dongdaemun-gu, Seoul 02455, Korea}
\email{jhkeum@kias.re.kr}
\thanks{AMS Classification: 14J29, 14F05, 32Q40, 32N15 .\\
Key words: fake projective planes, bicanonical map, embeddings, automorphisms.\\
 The present work was supported by the
 ERC Advanced grant n. 340258, `TADMICAMT' and the National Research Foundation of Korea (NRF). The first  author would also like to
 acknowledge the support and hospitality
 of KIAS, Seoul during his visit as KIAS scholar.}

\date{March 20, 2018} 

\begin{document}

\begin{abstract}
We show, for several fake projective planes with nontrivial automorphism group,  that the bicanonical map is an embedding.
\end{abstract}

\maketitle

\setcounter{section}{0} \section{Introduction}
A smooth compact complex surface with the same Betti numbers as the
complex projective plane ${\bbP}_{\bbC}^2$ is either ${\bbP}_{\bbC}^2$ or is called a fake projective plane. Indeed, such a surface has  $c_2=3$, $c_1^2=K^2=9$ and Picard number 1, thus its canonical class  is either ample or anti-ample, and in the latter case it is
isomorphic to  ${\bbP}_{\bbC}^2$.  In other words a
fake projective plane is a surface of general type
with $p_g=0$ and $c_1^2=3c_2=9$. Furthermore, its universal cover
is the unit 2-ball in $\bbC^2$ by \cite{Aubin} and \cite{Yau} and its fundamental group is a co-compact arithmetic subgroup of PU$(2,1)$ by \cite{Kl}.

Prasad and Yeung   \cite{PY} classified all possible
fundamental groups of fake projective planes. Their proof also shows that the automorphism group of a fake projective plane has order 1, 3, 9, 7, or 21. Then Cartwright and Steger (\cite{CS},  \cite{CS2}) carried out  a computer based group theoretic
enumeration  to obtain a more precise result: there are exactly 50 distinct fundamental groups, each corresponding to a pair of fake projective planes, complex
conjugate but not isomorphic to each other \cite{kk}. They also  computed the automorphism groups of all fake projective planes $X$. There occur four groups:
$$\Aut(X)\cong \{1\},\,\, C_3,\,\, C_3^2\,\, {\rm or}\,\, G_{21}\cong C_7:C_3,$$   where $C_n$ is the cyclic group of order $n$ and $G_{21}$ is the unique non-abelian group of
order $21$ (semidirect product of $C_7$ with $C_3$). Among the 50 pairs,   33 admit a non-trivial group of automorphisms: 3 pairs have $\Aut\cong G_{21}$, 3 pairs have $\Aut\cong C_3^2$ and  27 pairs have $\Aut\cong C_3$.

For each pair of fake projective planes  the first homology group  (abelianization of the fundamental group) $$H_1(X, \mathbb{Z})=Tor(H^2(X, \mathbb{Z}))=Tor(\Pic(X))$$ was also computed in \cite{CS2}.


  By Reider's theorem \cite{reider} (see the next section) the bicanonical system of a ball quotient $X$ is base point free, thus it defines a morphism.
  If the ball quotient $X$ has $\chi(X) \geq 2$, then  $K^2_X \geq 9 \chi(X) \geq 10$, and since a ball quotient cannot
 contain a curve of geometric genus $0$ or $1$, the bicanonical map embeds $X$ unless  $X$  contains a smooth genus $2$ curve $C$ with $C^2=0$, and $C K_X = 2$.


In the case $\chi (X) = 1$, for instance if we have a fake projective plane, we are below the Reider inequality $K^2_X \geq 10$,
and the question of the very-ampleness of the bicanonical system is interesting.

Every fake projective plane $X$ with automorphism group of order 21 cannot contain an effective curve with self-intersection 1, as was first proved in \cite{K13} (published in \cite{K17}, see also \cite{GKMS}). Thus by applying I. Reider's theorem, one sees that the bicanonical map of such a fake projective plane is an embedding into ${\bbP}^9$
(see for instance \cite{dbdc}).

 In addition to these 3 pairs of fake projective planes,  for 7 more pairs we confirm here the very-ampleness of the bicanonical system.

\begin{theorem}\label{Main} For the $7$ pairs of fake projective planes given in Table
\ref{7fpps} the bicanonical map is an embedding into ${\bbP}^9$.
\end{theorem}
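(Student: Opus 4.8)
The plan is to show that failure of $|2K_X|$ to be very ample would force $X$ to carry an effective divisor of self-intersection $1$, and then to exclude such a divisor using the automorphism. Throughout write $\Pic(X)=\ZZ H\oplus T$, where $H$ is the ample generator (so $H^2=1$), $T=H_1(X,\ZZ)$ is the finite torsion group, and $K_X\equiv 3H$ numerically. Since $\chi(\sO_X(2K_X))=1+K_X^2=10$ and $h^i(2K_X)=0$ for $i>0$ by Kodaira vanishing (as $K_X$ is ample), the bicanonical system has projective dimension $9$ and maps $X$ into $\bbP^9$; base point freeness is already guaranteed by Reider, so only very-ampleness, i.e.\ separation of length-two subschemes $Z$, remains to be checked.

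First I would run the Bogomolov--Reider argument at the boundary value $L^2=9$. Suppose $|2K_X|$ fails to separate a length-two subscheme $Z$ (two distinct points, or one point with a tangent direction). Taking $L=K_X$ one builds a rank-two bundle $\sE$ sitting in
\[
0\to \sO_X\to \sE\to \sI_Z\otimes \sO_X(K_X)\to 0,
\]
with $c_1(\sE)=K_X$ and $c_2(\sE)=\deg Z=2$. Because $c_1(\sE)^2=9>8=4c_2(\sE)$, the bundle $\sE$ is Bogomolov unstable, so there is a destabilizing sequence $0\to\sO_X(A)\to\sE\to \sI_{Z'}\otimes\sO_X(B)\to 0$ with $A+B\equiv K_X$ and $A-B$ in the positive cone, satisfying $(A-B)^2\ge c_1^2-4c_2=1$. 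Writing $A\equiv aH$, $B\equiv bH$ one gets $a+b=3$, $a-b=d\ge 1$ with $d$ odd, and $ab+\ell(Z')=2$. The possibility $d\ge 3$ is eliminated by effectivity (it pushes the smaller-degree conclusion divisor into a numerically trivial class, which cannot pass through $Z$), leaving $d=1$, i.e.\ an effective divisor $C\equiv H$ with $C^2=1$, $C\cdot K_X=3$, passing through $Z$. Hence it suffices to prove
\[
(\ast)\qquad\text{for each of the }7\text{ pairs, }X\text{ has no effective }C\text{ with }C^2=1,
\]
equivalently $h^0(X,H+\tau)=0$ for every $\tau\in T$.

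The main obstacle is exactly $(\ast)$, and here Riemann--Roch is silent: for any $\tau$ one computes $\chi(\sO_X(H+\tau))=1+\tfrac12(H^2-H\cdot K_X)=0$, so $h^0$ and $h^1$ cannot be separated numerically, and the adjunction genus $p_a(C)=3$ is perfectly compatible with the ball-quotient constraint (no curves of geometric genus $0$ or $1$). To break the tie I would bring in an automorphism $\sigma\in\Aut(X)$ of order $3$ (present in every relevant case, since $C_3$ sits inside each of $C_3$, $C_3^2$, $G_{21}$) and its induced action on $\Pic(X)$ and on the finite group $T$, the latter being explicitly tabulated by Cartwright--Steger. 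Since $\sigma$ preserves the ample cone and the Picard number is one, $\sigma^*H\equiv H$, so $\sigma^*$ permutes the finitely many classes $H+\tau$. Combining the topological Lefschetz formula (which yields that $\sigma$ has exactly three isolated fixed points, as $b_2=1$ and $\sigma^*$ acts trivially on $H^2$) with the holomorphic Lefschetz fixed-point formula applied to $\sO_X(H+\tau)$, one can evaluate the equivariant Euler characteristic $\tr(\sigma^*\mid H^0)-\tr(\sigma^*\mid H^1)$ and thereby force $h^0(H+\tau)=0$.

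This last step is a finite verification over the torsion groups occurring for the $7$ pairs of Table \ref{7fpps}, and the very choice of those pairs presumably reflects precisely the situations in which the $\sigma$-action on $T$ leaves no invariant effective class available. I expect this equivariant vanishing to be the crux of the whole proof, the Reider reduction being comparatively formal; the delicate point is that one cannot argue class by class in $\Pic$ without the group action, because the numerical invariants alone place $C$ squarely in the range the ball-quotient geometry permits.
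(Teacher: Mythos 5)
Your Reider/Bogomolov reduction is essentially the paper's own first step (Lemma \ref{Fab}), but the strategy you build on it has a genuine gap: the theorem is \emph{not} equivalent to your statement $(\ast)$, and $(\ast)$ is not available for all seven pairs. For the last pair of Table \ref{7fpps} (the one with $\Aut(X)=C_3^2$ and $H_1(X,\ZZ)=C_{14}$), the paper explicitly cannot rule out curves of self-intersection $1$: it shows in Section 4 that such an $X$ carries either none or exactly three curves $D_1,D_2,D_3$ with $D_i^2=1$, forming a triangle permuted by an order-$3$ automorphism $\sigma$, and then proves very-ampleness \emph{even in the presence of these curves}. That argument needs strictly more than your reduction provides: the refined form of Lemma \ref{Fab}(2) --- that an unseparated pair $P+Q$ is the \emph{unique} effective divisor in the linear system of $(K_X-D)|_D$, which comes from the curve-embedding results of \cite{cf}, \cite{4names} and not from Reider alone --- together with the second generator $\nu$ of $C_3^2$ (the one acting trivially on $H_1$), whose three fixed points are the vertices of the triangle, and finally the injectivity of restriction of torsion line bundles to $D$ (Lemma \ref{restTor}) to reach a contradiction. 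A proposal that reduces everything to $(\ast)$ cannot close this case.

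Even for the first six pairs, where $(\ast)$ does hold (Theorem \ref{main}), your proposed mechanism is a hope rather than a proof. The holomorphic Lefschetz formula cannot be applied to $\sO_X(H+\tau)$ when $\tau$ is not $\sigma$-invariant --- and the relevant $\tau$ typically are not, since $\sigma^*$ genuinely permutes the torsion classes; when it does apply, one must choose a linearization and control unknown local weights at the three fixed points; and since $\chi(H+\tau)=0$, trace identities alone do not force $h^0=0$ (you also tacitly assumed $h^2(H+\tau)=0$, which is itself a nontrivial vanishing numerically equivalent to $H^0(2L_0+t')=0$). The paper's actual mechanism is elementary and entirely different: the fundamental groups of these planes lift to $\SU(2,1)$, so $K_X$ has a cube root $L_0$ and every ample $L$ with $L^2=1$ equals $L_0+t$ with $t$ torsion; the Cartwright--Steger computation of $\pi_1(X/C_3)$, fed through the coinvariance surjection $H_1(X,\ZZ)_{C_3}\twoheadrightarrow H_1(X/C_3,\ZZ)$ of Proposition \ref{G-coinv} and Lemma \ref{little}, gives $t+\sigma^*t+\sigma^{2*}t=0$; hence $(L_0+t)+\sigma^*(L_0+t)+\sigma^{2*}(L_0+t)=K_X$, and since $p_g(X)=0$ the canonical class is not effective, so none of the three summands can be effective. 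This orbit-sum argument is what replaces your Lefschetz computation, and it is precisely where the hypothesis on $H_1(X/C_3,\ZZ)$ --- hence the selection of these particular pairs --- enters.
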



\begin{table}[ht]
\label{7fpps}
\renewcommand\arraystretch{1.5}
\caption{Seven pairs of fake projective planes}
\noindent $$
\begin{array}{|c|c|c|l|}
\hline
X& \Aut(X) & H_1(X, \mathbb{Z}) & \pi_1(X/C_3)\\
\hline\hline
(a=15, p=2, \{3,5\}, D_3) & C_3 & C_3\times C_7 & C_3\\
\hline
(a=15, p=2, \{3,5\}, 3_3) &C_3 &C_2^2\times C_3& C_3\\
\hline
(a=15, p=2, \{3,5\}, (D3)_3) &C_3 &C_3& C_3\\
\hline
(\mathcal{C}2, p=2, \{3\}, d_3D_3) & C_3^2 & C_7 & C_7, \{1\}, \{1\}, \{1\}\\
\hline
(\mathcal{C}10, p=2, \{17-\}, D_3) & C_3 & C_7& \{1\}\\
\hline
(\mathcal{C}18, p=3, \emptyset, d_3D_3) &C_3^2 & C_2^2\times C_{13} & C_{13}, Q_8, \{1\}, \{1\}\\
\hline
(\mathcal{C}2, p=2, \emptyset, d_3D_3) &C_3^2 & C_2\times C_{7} & C_{14}, S_3, C_2, C_2\\
\hline
\end{array}
$$
\end{table}

\begin{remark} (1) When $\Aut(X)\cong C_3^2$, there are four quotients corresponding to the four order 3 subgroups of $\Aut(X)$.

(2) By  \cite{CS2}, the fundamental groups of these surfaces  lift to $\SU(2,1)$, thus the tautological line bundle of ${\bbP}^2$ restricted to the ball descends to give a cube root of the canonical bundle of $X$. Note that the first three pairs in Table 1 have 3-torsions, thus have multiple cube roots of $K_X$.
\end{remark}

The first six pairs in Table \ref{7fpps} are covered by the following vanishing result.

\begin{theorem}\label{main} Let $X$ be a fake projective plane with a nontrivial $C_3$-action. Suppose that the quotient surface $X/C_3$ has $H_1(X/C_3, \mathbb{Z})=0$ or $C_3$. Then $$H^0(X, L)=0$$ for any ample line bundle $L$ with $L^2=1$, or equivalently, $X$ contains no effective curve $D$ with $D^2=1$.
\end{theorem}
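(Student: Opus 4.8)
The plan is to reformulate the statement in terms of an ample cube root of the canonical class and then to descend to the quotient by the order three automorphism. Write the N\'eron--Severi group modulo torsion as $\bbZ H$ with $H$ ample and $H^2=1$, so that $K_X\equiv 3H$ and every effective $D$ with $D^2=1$ satisfies $D\equiv H$; thus $L=\calO_X(D)$ is numerically an ample cube root of $K_X$, and Riemann--Roch gives $\chi(X,L)=\chi(\calO_X)+\tfrac12 L(L-K_X)=1-1=0$. Let $g$ generate the given $C_3$ and let $\pi\colon X\to Y:=X/C_3$ be the quotient. I would first establish the structure of $Y$: the topological Lefschetz formula (using $b_1=0$ and that $g^*$ acts trivially on $H^2(X,\bbQ)$ because the Picard number is $1$) shows $g$ has exactly three fixed points, and the holomorphic Lefschetz fixed point formula, together with the constraint that the local contributions sum to $\chi(\calO_X)=1$, forces each local action to be of type $\tfrac13(1,2)$. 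Hence $Y$ has exactly three $A_2$ (Du Val) singular points, $K_Y$ is an ample Cartier divisor with $\pi^*K_Y=K_X$, $K_Y^2=3$, $\chi(\calO_Y)=1$ and $q(Y)=p_g(Y)=0$; in particular $h^0(Y,K_Y)=0$.

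Next I would translate the existence of $D$ into data on $Y$. Since $g$ permutes the effective divisors numerically equivalent to $H$, either $D$ is $g$-invariant or its orbit is $\{D,gD,g^2D\}$. In the invariant case $L$ carries a $C_3$-linearization, so $H^0(X,L)=\bigoplus_j H^0(Y,\calM_j)$, where the $\calM_j$ are the three rank one reflexive character-eigensheaves of $\pi_*L$, each satisfying $\calM_j\equiv \tfrac13 K_Y$; it then suffices to prove $h^0(Y,\calM_j)=0$ for every $j$. In the free-orbit case the invariant effective divisor $D+gD+g^2D\equiv K_X$ descends to a (reduced, irreducible) effective divisor $\bar D\equiv K_Y$ on $Y$. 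In either situation the existence of $D$ produces an effective representative of a \emph{cube root of $K_Y$}, or of $K_Y$ itself, up to torsion.

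I would then compute each $\chi(Y,\calM_j)$ by equivariant Riemann--Roch, i.e.\ by combining $\chi(X,L)=0$ with the two holomorphic Lefschetz numbers $\sum_i(-1)^i\,\tr(g^k\mid H^i(X,L))$, whose fixed point contributions at the three $\tfrac13(1,2)$ points are explicit: $g$ acts trivially on $(K_X)_P=\wedge^2\Omega_{X,P}$, so its eigenvalue on $L_P$ is a cube root of unity determined by the chosen linearization. This pins down the three eigen-Euler-characteristics, which come out $\le 0$, and Serre duality identifies the only possibly nonzero higher term $H^2(Y,\calM_j)=H^0(Y,K_Y-\calM_j)^\vee$ with sections of a class numerically $\tfrac23K_Y$. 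To convert $\chi\le 0$ into $h^0=0$ one uses that an effective cube root of $K_Y$ has a Cartier multiple numerically equivalent to a multiple of $K_Y$, and that the hypothesis $H_1(X/C_3)=H_1(Y)\in\{0,C_3\}$ controls $\Pic(Y)_{\mathrm{tors}}$: when $H_1(Y)=0$, numerical equivalence of Cartier classes is linear equivalence, so a section would force $h^0(Y,K_Y)\ge 1$, contradicting $p_g(Y)=0$; when $H_1(Y)=C_3$ only finitely many torsion twists of the canonical class remain, and these are excluded one at a time.

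The step I expect to be the main obstacle is precisely this last torsion bookkeeping. The class group $\Cl(Y)$ is strictly larger than $\Pic(Y)$, since each $A_2$ point contributes a local $\bbZ/3$, so a Weil divisor numerically equivalent to $K_Y$ (or the reflexive sheaf $\calM_j$) can a priori differ from the expected Cartier class by a \emph{non-Cartier} torsion element. Tracking these local classes through $\pi$ and through the minimal resolution $\tilde Y$ (on which $K_{\tilde Y}=\rho^*K_Y$, as the singularities are canonical), and then invoking $H_1(\tilde Y)=H_1(Y)$ to bound the genuine torsion, is where the real work lies. A secondary difficulty is the uniform treatment of the $g$-invariant and free-orbit cases and the exact evaluation of the local (Dedekind-sum) fixed point contributions in the Lefschetz computation. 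Once these are settled, the vanishing $h^0(X,L)=\sum_j h^0(Y,\calM_j)=0$ follows, which is exactly the assertion that $X$ carries no effective curve of self-intersection one.
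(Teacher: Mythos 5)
There is a genuine gap, and it sits exactly where you place the ``real work'': your proposal never pins down the torsion class of the descended divisor, and the mechanism you suggest for doing so cannot work. The whole theorem is about exact (linear) equivalence, not numerical equivalence: by Riemann--Roch and Serre duality, for \emph{every} nonzero torsion class $t\in\Pic(X)$ one has $h^2(X,K_X+t)=h^0(X,-t)=0$ and $\chi(X,K_X+t)=\chi(\calO_X)=1$, hence $h^0(X,K_X+t)\ge 1$; so every numerically canonical class with nonzero torsion part \emph{is} effective, and the statement to be proved is precisely that $D+\sigma D+\sigma^2D\sim K_X$ on the nose. In particular no Euler-characteristic or Lefschetz computation can conclude (besides, $\chi\le 0$ gives nothing about $h^0$ without control of $h^1$). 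On the quotient side your control of torsion fails: the discrepancy $\bar D-K_Y$ is a torsion class in $\Cl(Y)$, i.e.\ it corresponds to a cover of $Y$ \'etale in codimension one, and such classes are governed by $\pi_1(Y\setminus\mathrm{Sing}\,Y)$, not by $H_1(Y,\ZZ)=H_1(\tilde Y,\ZZ)$. Indeed the quotient map $X\to Y$ is itself \'etale in codimension one and already defines a nontrivial $3$-torsion class in $\Cl(Y)$ invisible to $H_1(Y)=0$; more generally $\pi_1(Y\setminus\mathrm{Sing}\,Y)$ contains $\pi_1(X)$ with index $3$, so $\Cl(Y)_{\mathrm{tors}}$ is not bounded by $H_1(Y)$ at all. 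Concretely: your argument does close the \emph{invariant}-orbit case when $H_1(Y)=0$ (there $3\bar D$ is Cartier, numerically $K_Y$, hence $\sim K_Y$ since $\Pic(Y)$ is torsion free, contradicting $p_g(Y)=0$); but in the free-orbit case $\bar D\equiv K_Y$ need not be Cartier, passing to the Cartier multiple $3\bar D\equiv 3K_Y$ is useless because $h^0(Y,3K_Y)=\chi(\calO_Y)+3K_Y^2=10$, and the case $H_1(Y)=C_3$ is deferred to unspecified ``bookkeeping.''

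The paper's proof never goes to $Y$ as a variety; it does the torsion bookkeeping upstairs on $X$, where every divisor is Cartier and $\mathrm{Tor}(\Pic(X))=H_1(X,\ZZ)$. Since $\pi_1(X)$ lifts to $\SU(2,1)$, $K_X$ has a cube root $L_0$, so any ample $L$ with $L^2=1$ equals $L_0+t$ with $t$ torsion. The hypothesis $H_1(X/C_3,\ZZ)=0$ or $C_3$ enters purely group-theoretically: by Proposition \ref{G-coinv} the kernel of $H_1(X,\ZZ)_{C_3}\twoheadrightarrow H_1(X/C_3,\ZZ)$ is a $3$-group (the stabilizers are $C_3$'s), so the prime-to-$3$ part of the coinvariants vanishes, and the elementary Lemma \ref{little} then yields $t+\sigma^*t+\sigma^{2*}t=0$ for \emph{all} torsion $t$. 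Consequently $L+\sigma^*L+\sigma^{2*}L=K_X$ exactly in $\Pic(X)$ (also when $\sigma^*L_0=L_0+t_3$ with $t_3$ a $3$-torsion class, since then the sum is $3L_0+3t_3=K_X$), so effectivity of $L$ would force effectivity of $K_X$, contradicting $p_g(X)=0$. Note that if you tried to complete your descent, showing that $\bar D-K_Y$ lies in the $\ZZ/3\subset\Cl(Y)$ generated by the cover $X\to Y$ is equivalent to showing $D+\sigma D+\sigma^2D\sim K_X$ on $X$: the route through $\Cl(Y)$ does not bypass the paper's lemma, it lands back on it.
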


For the last pair in Table \ref{7fpps} we do not have the vanishing theorem. The surfaces possess  either none or 3 curves $D$ with $D^2=1$.
But even in the latter case we prove the very-ampleness of the bicanonical system (See Theorem \ref{7th}).

In Section 5 we discuss three more pairs with a nontrivial $C_3$-action, for which we prove that the bicanonical map is an embedding outside 3 points, the fixed locus of the $C_3$-action.


\section{Preliminaries}

 For the reader ' s convenience, we recall Reider's theorem \cite{reider} by stating the expanded version given in Theorem 11.4 of \cite{bpv}.

\begin{theorem}\label{reider} \cite{reider} Let L be nef divisor on a smooth projective surface $X$.
\begin{enumerate}
\item Assume that $L^2\ge 5$. If $P$ is a base point of the linear system $|K_X+L|$, then $P$ lies on an effective divisor $D$ such that
\begin{enumerate}
\item $DL=0,\,\,\,\, D^2=-1$, or
\item $DL=1,\,\,\,\, D^2=0$.
\end{enumerate}
\item
Assume that $L^2\ge 9$. If two different points $P$ and $Q$, possibly infinitely near,  are not base points of $|K_X+L|$ and fail to be separated by $|K_X+L|$, then they lie on an effective curve $D$,  depending on $P,\,\,Q$, satisfying one of the following:
\begin{enumerate}
\item $DL=0,\,\,\,\, D^2=-2$ or $-1$;
\item $DL=1,\,\,\,\, D^2=-1$ or $0$;
\item $DL=2,\,\,\,\, D^2=0$;
\item $L^2=9$ and $L$ is numerically equivalent to $3D$.
\end{enumerate}
\end{enumerate}
\end{theorem}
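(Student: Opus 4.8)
The plan is to derive both parts from Bogomolov's instability theorem for rank-two bundles, reducing the separation statements to a numerical case analysis in the N\'eron--Severi lattice. First I would record the vanishing input: since $L$ is nef with $L^2\geq 5>0$ it is nef and big, so Kawamata--Viehweg vanishing gives $H^i(X,K_X+L)=0$ for $i>0$. Hence a point $P$ is a base point of $|K_X+L|$ if and only if the restriction $H^0(K_X+L)\to (K_X+L)|_P$ is not surjective, which by the cohomology sequence of $0\to \mathcal{I}_P\otimes(K_X+L)\to K_X+L\to (K_X+L)|_P\to 0$ is equivalent to $H^1(X,\mathcal{I}_P\otimes(K_X+L))\neq 0$. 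In part (2) the same reduction, applied to the length-two subscheme $Z$ supported on $P$ and $Q$ (with the suitable non-reduced structure when they are infinitely near), shows that the failure to separate $P$ and $Q$ is equivalent to $H^1(X,\mathcal{I}_Z\otimes(K_X+L))\neq 0$, given that neither is already a base point.

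Next I would convert this non-vanishing into a vector bundle through Serre's construction. By Serre duality $H^1(X,\mathcal{I}_Z\otimes(K_X+L))^\vee\cong \mathrm{Ext}^1(\mathcal{I}_Z\otimes L,\mathcal{O}_X)$, and the schemes $Z$ in play (length $1$ in part (1), length $2$ in part (2)) satisfy the Cayley--Bacharach condition with respect to $|K_X+L|$: for length two this says that every section vanishing at one of $P,Q$ vanishes at the other, which is exactly the failure to separate them under the standing non-base-point hypothesis. A nonzero extension class then yields a locally free rank-two bundle $E$ with $0\to \mathcal{O}_X\to E\to \mathcal{I}_Z\otimes L\to 0$, and the Whitney formula gives $c_1(E)=L$ and $c_2(E)=\deg Z=\ell$, where $\ell=1$ in part (1) and $\ell=2$ in part (2).

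The key numerical point is then $c_1(E)^2-4c_2(E)=L^2-4\ell$, which is at least $5-4=1>0$ in part (1) and at least $9-8=1>0$ in part (2). By Bogomolov's instability theorem $E$ is unstable, so it admits a saturated rank-one subsheaf $\mathcal{O}_X(M)\hookrightarrow E$ with $(2M-L)^2>0$ and $(2M-L)\cdot H>0$ for every ample $H$. Writing $N:=L-M$, the saturated quotient has the form $\mathcal{I}_{Z'}\otimes\mathcal{O}_X(N)$ for some effective $Z'$, so that $M+N\equiv L$ and $M\cdot N+\deg Z'=\ell$; in particular $(M-N)^2>0$ and $M\cdot N\leq\ell$. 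Composing $\mathcal{O}_X(M)\hookrightarrow E$ with the projection $E\to \mathcal{I}_Z\otimes L$ and distinguishing whether this composite vanishes then produces an effective divisor $D$ --- essentially $M$ or $N$ --- passing through the chosen point(s).

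The hard part will be the final step: showing that the data $M+N\equiv L$, $M\cdot N\leq\ell$, $(M-N)^2>0$, together with $L$ nef and $L^2\geq 5$ (resp. $9$), force $D$ into the short list of types $(DL,D^2)$ in the statement. Here I would use the Hodge index theorem in the form $(D\cdot L)^2\geq D^2\,L^2$: nefness bounds $D\cdot L$ from below by $0$, the inequality $(M-N)^2>0$ together with $M\cdot N\leq \ell$ controls the possible values of $D^2$, and the Hodge-index estimate eliminates everything except the tabulated pairs. The borderline alternative (d) in part (2), namely $L^2=9$ and $L\equiv 3D$, arises precisely when the Hodge inequality degenerates to equality, forcing $L$ and $D$ onto a common ray in the N\'eron--Severi space. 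I expect the most delicate point to be the infinitely near case in part (2), where Serre's construction must be run for a non-curvilinear length-two scheme and one must verify that Cayley--Bacharach still yields a locally free $E$.
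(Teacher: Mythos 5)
This is Reider's theorem, which the paper does not prove: it is quoted verbatim from \cite{reider} in the expanded form of Theorem 11.4 of \cite{bpv}, so there is no proof in the paper to compare against. Your outline reconstructs exactly the argument of those cited sources --- Kawamata--Viehweg vanishing, the Cayley--Bacharach/Serre construction of a rank-two bundle $E$ with $c_1(E)^2-4c_2(E)>0$, Bogomolov instability, and the concluding Hodge-index case analysis --- and the inequalities you record (including $(M-N)\cdot H>0$, which yields $(L-2D)\cdot L>0$ and is indispensable for eliminating large values of $D\cdot L$) do suffice to close that analysis, so your approach is correct and essentially the same as the cited proof.
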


A ball quotient cannot
 contain a curve of geometric genus $0$ or $1$.
By Reider's theorem the bicanonical system of a ball quotient is base point free, thus defines a morphism.  Let $X$ be a fake projective plane and let
$$\Phi_{2,X} : X \to {\bbP}^9$$ be the bicanonical morphism.

\begin{lemma}\label{Fab} Let $X$ be a fake projective plane.
\begin{enumerate}
\item If $D$ is an effective curve on $X$ with $D^2=1$, then $D$ is an irreducible curve of arithmetic genus 3, $h^0(X, \mathcal{O}_X(K_X-D))=0$ and $h^0(X, \mathcal{O}_X(D))=1$. In particular $X$ may contain at most finitely many curves $D$ with $D^2=1$. Their  number is bounded by $|H_1(X, \mathbb{Z})|$.
\item  If two different points $P$ and $Q$ on $X$ (possibly infinitely near) are not separated by $\Phi_{2,X}$, then there is a curve $D$ with $D^2=1$ containing $P, Q$ such that $h^0(D, \mathcal{O}_D((K_X-D)|_D)=1$ and $P+Q$ is the unique effective divisor in the linear system of $(K_X-D)|_D$.  In particular, a curve $D$ with $D^2=1$ may contain at most one pair of points (possibly infinitely near) that are not separated by $\Phi_{2,X}$.  Such a curve $D$ is uniquely determined by $P,Q$.
\item The bicanonical map $\Phi_{2,X}$ yields  an isomorphism with its image of the complement $U$ of a  finite set of points. The bicanonical image $\Sigma$ is a surface with isolated singularities only and $\Phi_{2,X}:X\to \Sigma$  is  the normalization map.
\end{enumerate}
\end{lemma}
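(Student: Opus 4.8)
The plan is to extract everything from the fact that $X$ has Picard number one, combined with Reider's Theorem~\ref{reider} and Riemann--Roch on the curve $D$. I first fix the numerics: since $\rho(X)=1$, the group $\NS(X)$ modulo torsion is generated by an ample class $H$ with $H^2=1$, and $K_X\equiv 3H$ because $K_X^2=9$. Hence any effective $D$ with $D^2=1$ satisfies $D\equiv H$ and $DK_X=3$. For Part (1), irreducibility is immediate: every effective curve is numerically a positive integral multiple of the primitive class $H$, so $D\equiv H$ admits no nontrivial effective splitting and $D$ is integral. Adjunction gives $2p_a(D)-2=D^2+DK_X=4$, so $p_a(D)=3$. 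For $h^0(K_X-D)=0$ I argue by contradiction: if $K_X-D$ were effective, then $K_X\sim D+(K_X-D)$ would be effective, contradicting $p_g(X)=0$ (the same trick gives $h^0(K_X-mD)=0$ for all $m\ge 1$). For $h^0(D)=1$ I use that $|D|$ has no fixed part (primitivity of $H$) and cannot be composed with a pencil (a pencil would have self-intersection-$0$ moving class, impossible as every effective class is a positive multiple of $H$ with $H^2=1$); and if $|D|$ mapped onto a surface then $1=D^2\ge\deg(\text{image})\ge\dim|D|-1$ would force a birational map $X\dashrightarrow\mathbb{P}^2$, contradicting general type. Finiteness, with bound $|H_1(X,\mathbb{Z})|$, follows because two such curves differ by a torsion class in $\Pic(X)$, and $h^0=1$ makes the effective representative in each torsion class unique.

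For Part (2) I apply Reider's Theorem~\ref{reider}(2) with $L=K_X$, so $L^2=9$ and $|K_X+L|=|2K_X|$ is the base-point-free bicanonical system. For $D\equiv dH$ one has $DL=3d$ and $D^2=d^2$, so cases (a)--(c), which require $DL\in\{0,1,2\}$, are arithmetically impossible; only (d) survives, forcing $K_X\equiv 3D$, i.e. $D\equiv H$ and $D^2=1$, so the Reider curve is one of those analysed in Part (1). Next I show $|2K_X|$ restricts to the complete system on $D$: from $0\to\mathcal{O}_X(2K_X-D)\to\mathcal{O}_X(2K_X)\to\mathcal{O}_D(2K_X|_D)\to 0$, together with $2K_X-D=K_X+(K_X-D)$ and $K_X-D\equiv 2H$ ample, Kodaira vanishing gives $h^1(2K_X-D)=0$, so $H^0(2K_X)\twoheadrightarrow H^0(\mathcal{O}_D(2K_X|_D))$. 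Thus $P,Q$ fail to be separated by $\Phi_{2,X}$ iff they fail to be separated by the complete degree-$6$ system $|2K_X|_D|$ on the genus-$3$ curve $D$. A residuation computation (Riemann--Roch and Serre duality on $D$, using $K_D=(K_X+D)|_D$) rewrites this as the condition that $P+Q$ imposes dependent conditions, which is equivalent to $P+Q\sim(K_X-D)|_D$, a class of degree $(K_X-D)D=2$. In particular $(K_X-D)|_D$ is effective, giving $h^0(\mathcal{O}_D((K_X-D)|_D))\ge 1$.

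The main obstacle is the reverse inequality $h^0(\mathcal{O}_D((K_X-D)|_D))\le 1$, i.e. the exclusion of a base-point-free $g^1_2$ on $D$ (which would make $\Phi_{2,X}$ two-to-one along $D$ and destroy both the uniqueness claim and Part (3)). I reduce it to $h^0(\mathcal{O}_X(2D))=1$: the restriction sequence for $\mathcal{O}_X(K_X-2D)\hookrightarrow\mathcal{O}_X(K_X-D)$ together with $h^0(K_X-D)=0$, Serre duality, $\chi(\mathcal{O}_X(2D))=0$ and $h^0(K_X-2D)=0$ yields $h^0(\mathcal{O}_D((K_X-D)|_D))\le h^1(K_X-2D)=h^1(\mathcal{O}_X(2D))=h^0(\mathcal{O}_X(2D))$. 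To pin down $h^0(\mathcal{O}_X(2D))=1$ I again run a fixed-part/pencil analysis: the component $\equiv H$ is excluded by Part (1), a one-dimensional image is excluded by $(2D)^2=4\ne 0$, and the residual surface case must be removed using that a ball quotient carries no curve of geometric genus $\le 1$ and has Picard number one. This exclusion of the hyperelliptic-type degeneration is where I expect the real work to lie. Granting it, $P+Q$ is the unique effective divisor in $|(K_X-D)|_D|$, so $D$ carries at most one non-separated pair; and $D$ is uniquely determined by $P,Q$ because two distinct curves numerically equivalent to $H$ meet in $H^2=1$ point, whereas $D$ and a second such $D'$ would share the length-two scheme $P+Q$, forcing $2\le D\cdot D'=1$.

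For Part (3) the deduction is formal. Since $2K_X$ is ample, $\Phi_{2,X}$ contracts no curve and is a finite morphism. By Parts (1) and (2) any two non-separated points lie on one of the finitely many curves $D$ with $D^2=1$, and each such $D$ accounts for a single pair, so the locus where $\Phi_{2,X}$ fails to be a local isomorphism is finite. Hence $\Phi_{2,X}$ restricts to an isomorphism onto its image over the complement $U$ of a finite set; the image $\Sigma\subset\mathbb{P}^9$ is smooth away from the corresponding finite set and therefore has only isolated singularities; and since $X$ is smooth and $\Phi_{2,X}$ is finite and birational onto $\Sigma$, it is the normalization map.
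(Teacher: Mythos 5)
Your overall skeleton (Picard-number-one numerics, Reider with $L=K_X$ so that only case (d) survives, a curve-theoretic analysis on $D$, and the intersection-number argument $D\cdot D'=1<2$ for uniqueness of $D$) matches the paper, but there are two genuine gaps. First, in Part (1) your exclusion of the pencil case is based on a false principle: a linear system ``composed with a pencil'' does \emph{not} have a self-intersection-zero moving class, because removing the fixed part does not remove isolated base points (the pencil of lines through a point of $\mathbb{P}^2$ is the standard counterexample). So the case $h^0(\mathcal{O}_X(D))=2$, where $|D|$ is a pencil of genus-3 curves with a single base point, is simply not ruled out by anything you say; your Picard-number and general-type arguments do not touch it. The paper kills all cases at once by a counting trick you do not use: a $2$-dimensional subspace of $H^0(\mathcal{O}_X(D))$ gives via $\mathrm{Sym}^4$ at least $5$ independent sections of $\mathcal{O}_X(4D)$ (independent because a quartic form in two sections factors into linear forms, each a nonzero section of $\mathcal{O}_X(D)$, and $X$ is integral), whereas $4D-K_X\equiv H$ is ample, so Kodaira vanishing and Riemann--Roch give $h^0(\mathcal{O}_X(4D))=\chi(\mathcal{O}_X(4D))=3$, a contradiction.

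Second, and more seriously, the central claim of Part (2) --- $h^0(D,\mathcal{O}_D((K_X-D)|_D))=1$, i.e.\ the exclusion of a hyperelliptic-type $g^1_2$ --- is exactly what you do not prove: you reduce it to $h^0(\mathcal{O}_X(2D))=1$ and then write ``granting it.'' Your instinct that this is where the real work lies is correct, but without this step there is no ``unique effective divisor,'' no ``at most one non-separated pair per curve,'' and Part (3) collapses (the bicanonical map could a priori be $2$-to-$1$ along $D$). Moreover the reduction does not make the problem easier: $\chi(\mathcal{O}_X(2D))=0$ gives no upper bound on $h^0(\mathcal{O}_X(2D))$, and the $\mathrm{Sym}^2$ count is inconclusive since $h^0(\mathcal{O}_X(4D))=3$, so the fixed-part/pencil analysis you sketch does not close it either. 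The paper proceeds differently at this point: writing $K_X=3D+\tau$, it invokes the duality results of \cite{cf} and \cite{4names} to get $H^1(D,\mathcal{I}_{P,Q}\,\omega_D(2D+\tau))\cong\mathbb{C}$, whose dual $\mathrm{Hom}(\mathcal{I}_{P,Q}(2D+\tau),\mathcal{O}_D)\cong\mathbb{C}$ produces a nonzero map that is forced, by a degree count on the irreducible curve $D$, to be an isomorphism $\mathcal{I}_{P,Q}(2D+\tau)\cong\mathcal{O}_D$; this makes $\mathcal{I}_{P,Q}$ invertible, identifies $P+Q$ as the divisor of the distinguished section, and is stated to yield $H^0(\mathcal{O}_D(2D+\tau))\cong\mathbb{C}$. (A side benefit of that cluster formalism: it covers singular $D$ and infinitely near points, whereas your residuation computation implicitly assumes $D$ smooth, which at this stage of the paper is not yet known --- only $p_a(D)=3$ is.) Your residuation argument is a legitimate, more elementary substitute for the first half of this (non-separation $\Leftrightarrow P+Q\sim(K_X-D)|_D$), but the second half is the substance of the lemma, and the proposal leaves it open.
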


\begin{proof} (1) If $h^0(X, D)\ge 2$, then $h^0(X, 4D)\geq 5$. On the other hand, since $4D-K$ is ample, we have $h^1(X, 4D)=h^2(X, 4D)=0$,
 hence, by Riemann-Roch, $h^0(X, 4D)=3$.

(2) By Reider's theorem (Theorem \ref{reider}), if  the bicanonical system $|2K_X|$ does not separate
two points $P, Q$ (possibly infinitely near) then  there exists a divisor $D$ containing both $P,Q$
and such that

 $K \equiv 3 D $ modulo torsion ($\equiv$ denotes here as classical linear equivalence).\\
One sees immediately in fact that, since $ NS(X)$ has rank equal to 1, and its torsion free part is generated by a divisor $L$ with $L^2 =1$, the alternatives (a), (b), (c)
in Theorem \ref{reider} are not possible  ($D^2 \leq 0$ contradicts that $D $ is numerically equivalent to a nontrivial multiple of $L$).
Write then $ K = 3 D + \tau$, and observe that
$$ 2 K = K+ D + ( 2 D + \tau). $$
By \cite{cf} and \cite{4names} in view of the exact sequence
$$ 0  \rightarrow \sI_{P,Q} \om_D (2 D + \tau )  \rightarrow   \om_D (2 D + \tau )  \rightarrow \CC^2  \rightarrow 0$$
it must also hold that
$$ H^1 (D, \sI_{P,Q} \om_D (2 D + \tau )) \cong \CC$$
hence there is an isomorphism $  \sI_{P,Q}  (2 D + \tau ) \cong \hol_D$, thus
$$  \sI_{P,Q}   \cong \hol_D(-2 D - \tau ),$$ $  \sI_{P,Q} $ is invertible and $P+Q$ is the unique divisor of
a section  $\in H^0 ( \hol_D(2 D +  \tau )) \cong \CC.$

 If $P$ and $Q$  are contained in two different curves $D_1$ and $D_2$, then $D_1D_2\ge 2$, which is not possible since the  curves
 $D_1, D_2 $ are numerically equivalent and have self-intersection 1.  This proves the uniqueness of such a curve $D$.

(3) follows from (1) and (2).
\end{proof}

In \cite{K08}, all
possible structures of the quotient surface of a fake projective plane and its minimal
resolution were classified.

\begin{theorem}\label{Keum} \cite{K08} Let $X$ be a fake projective plane with a group $G$ acting on it. Then the fixed locus of any automorphism in $G$ consists of 3 isolated points. Moreover the following hold.
\begin{enumerate}
\item If $G=C_3$, then $X/G$ is a ${\mathbb Q}$-homology
projective plane with $3$ singular points of type
$1/3(1,2)$ and its minimal resolution is a minimal
surface of general type with $p_g=0$ and $K^2=3$.
\item If $G=C_3^2$, then $X/G$ is a ${\mathbb Q}$-homology projective plane
with $4$ singular points of type $1/3(1,2)$ and its
minimal resolution is a minimal surface of general type with
$p_g=0$ and $K^2=1$. \item If $G=C_7$, then $X/G$ is a ${\mathbb
Q}$-homology projective plane with $3$ singular points of type
$1/7(1,5)$ and its minimal resolution is a $(2,3)$-,
$(2,4)$-, or $(3,3)$-elliptic surface.
\item If $G=7:3$, then $X/G$ is a ${\mathbb Q}$-homology projective plane with $4$
singular points, where three of them are of type
$1/3(1,2)$ and one of them is of type
$1/7(1,5)$, and its minimal resolution is a $(2,3)$-,
$(2,4)$-, or $(3,3)$-elliptic surface.
\end{enumerate}
\end{theorem}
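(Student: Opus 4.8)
The plan is to run the two Lefschetz fixed point formulas against a nontrivial $g$ of prime order $\ell\in\{3,7\}$. Because $X$ has the rational cohomology of $\bbP^2$ and $g$ fixes the ample generator of $\NS(X)\otimes\mathbb{Q}$ (the Picard number being $1$), $g^\ast$ acts trivially on $H^2(X,\mathbb{Q})$, so the topological Lefschetz number is
$$L(g)=\sum_i(-1)^i\,\tr\bigl(g^\ast\mid H^i(X,\mathbb{Q})\bigr)=1+1+1=3 .$$
First I would exclude a one-dimensional fixed component: a fixed curve $C$ is smooth and, since $\NS(X)$ has rank one, satisfies $C\equiv dL$ with $d\ge 1$, $C^2=d^2$, so adjunction gives $g(C)=1+\tfrac12(d^2+3d)\ge 3$; feeding the Atiyah--Bott fixed-curve term into the holomorphic Lefschetz identity $\sum_p\det(I-dg_p)^{-1}=\chi(\mathcal{O}_X)=1$ is then incompatible with $L(g)=3$. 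Hence $\mathrm{Fix}(g)$ consists of isolated points, and since each contributes $+1$ there are exactly three. Writing the eigenvalues of $dg_p$ as $(\zeta^{a_p},\zeta^{b_p})$ with $\zeta=e^{2\pi i/\ell}$, I would then solve the finite system obtained by applying $\sum_p\frac{1}{(1-\zeta^{a_p})(1-\zeta^{b_p})}=1$ to $g$ and all its powers: for $\ell=3$ the unique solution is three points of type $\tfrac13(1,2)$, and for $\ell=7$ three points with eigenvalue pairs $(1,5),(2,3),(4,6)$, each giving the single cyclic quotient singularity $\tfrac17(1,5)$.

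Next comes the orbit analysis that produces the singular points of $Y=X/G$. The decisive group-theoretic fact is that neither $C_3^2$ nor $G_{21}$ admits a faithful two-dimensional representation, so no point of $X$ can be fixed by all of such a $G$; hence the fixed loci of distinct order-$3$ subgroups, and of the normal $C_7$, are pairwise disjoint. For $G=C_3$ the three fixed points descend to three $\tfrac13(1,2)$ singular points. For $G=C_3^2$ the four order-$3$ subgroups contribute $4\times 3=12$ fixed points, each with stabilizer a single $C_3$; each triple is one $G$-orbit, giving four $\tfrac13(1,2)$ points. For $G=C_7$ the three points are fixed by all of $G$ and so give three distinct $\tfrac17(1,5)$ points. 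For $G=G_{21}$ the three points of $\mathrm{Fix}(C_7)$ form one orbit (stabilizer $C_7$), a single $\tfrac17(1,5)$ point, while the seven Sylow $3$-subgroups contribute $7\times 3=21$ points falling into three orbits of size $7$ (stabilizer $C_3$), i.e.\ three $\tfrac13(1,2)$ points --- four singular points in all.

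I would then record the global invariants uniformly. In each case $\pi:X\to Y$ is étale in codimension one, so $\pi^\ast K_Y=K_X$ and $K_Y^2=K_X^2/|G|=9/|G|$; moreover $H^\ast(Y,\mathbb{Q})=H^\ast(X,\mathbb{Q})^G$ gives $b_2(Y)=1$ and $b_1(Y)=b_3(Y)=0$, so $Y$ is a $\mathbb{Q}$-homology projective plane. Since $p_g(X)=q(X)=0$ and the singularities of $Y$ are rational, the minimal resolution $\tilde Y$ has $p_g(\tilde Y)=q(\tilde Y)=0$ and hence $\chi(\mathcal{O}_{\tilde Y})=1$.

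Finally the resolution. For $G=C_3$ and $C_3^2$ every singularity is $\tfrac13(1,2)=A_2$, a rational double point, so the minimal resolution is crepant: $K_{\tilde Y}=\rho^\ast K_Y$ is nef (pullback of the ample $K_Y$) and $K_{\tilde Y}^2=K_Y^2=3$, resp.\ $1$, exhibiting $\tilde Y$ as a minimal surface of general type with $p_g=0$. For the two cases containing $C_7$ the singularity $\tfrac17(1,5)$ resolves to a chain of rational curves of self-intersections $(-2,-2,-3)$; an Euler-number count (from $e(X)=3$, the codimension-one étaleness, and the exceptional chains) gives $e(\tilde Y)=12$, whence Noether's formula forces $K_{\tilde Y}^2=0$. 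The substantive remaining point --- and the one I expect to be the main obstacle, together with the a priori exclusion of a fixed curve --- is to prove that $\tilde Y$ is properly elliptic (Kodaira dimension $1$, not Enriques), to exhibit its elliptic pencil, and to determine the two multiple fibers, whose multiplicities are then forced to be $(2,3)$, $(2,4)$, or $(3,3)$. The rest is routine bookkeeping.
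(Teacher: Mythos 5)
First, a remark on what you are being compared against: the paper you were given does not prove Theorem \ref{Keum} at all --- it is quoted, statement only, from \cite{K08} --- so the relevant comparison is with Keum's proof there. Your skeleton is in fact the same as that proof's: triviality of the $G$-action on $H^{*}(X,\QQ)$ gives topological Lefschetz number $3$, the holomorphic Lefschetz formula (applied to all powers of $g$) pins down the local eigenvalue types, a stabilizer/orbit analysis handles $C_3^2$ and $G_{21}$, and Euler-number plus Noether bookkeeping gives the invariants of the minimal resolution. The numerical facts you assert along the way are correct: the unique solution of the order-$3$ system is three points of type $1/3(1,2)$, the order-$7$ types $(1,5),(2,3),(4,6)$ all give the singularity $1/7(1,5)$ with Hirzebruch--Jung chain $(-2,-2,-3)$, $e(\tilde Y)=12$ and $K^2_{\tilde Y}=0$ in the cases involving $C_7$, and the $A_2$ cases are crepant, giving minimal surfaces of general type with $K^2=3$ and $1$.

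There are, however, three genuine problems. (i) Your ``decisive group-theoretic fact'' is false for $C_3^2$: the representation $(a,b)\mapsto\mathrm{diag}(\omega^a,\omega^b)$ is a faithful two-dimensional representation of $C_3^2$. What saves the disjointness of the fixed loci is different: any faithful abelian image of $C_3^2$ in $\GL_2(\CC)$ is conjugate to the full diagonal group $\{\mathrm{diag}(\omega^a,\omega^b)\}$, which necessarily contains nontrivial elements with eigenvalue $1$; such an element would fix a curve germ through the point, contradicting the isolatedness of fixed loci already established for every order-$3$ element. (Your claim is correct for $G_{21}$, whose irreducible representations have dimension $1$ or $3$.) (ii) The exclusion of a fixed curve is asserted (``is then incompatible with $L(g)=3$'') but never carried out; it does work, but only after a real computation with the Atiyah--Singer curve term for $g$ and its powers, together with the observation that at most one fixed curve can occur (components of a fixed locus are disjoint, while Picard number $1$ forces any two curves to meet). (iii) Most seriously, what you defer as ``the main obstacle'' is precisely the content of parts (3) and (4): the invariants $K^2_{\tilde Y}=0$, $p_g=q=0$, $\chi=1$, $b_2=10$ are shared by rational surfaces and by Enriques surfaces, so Noether bookkeeping alone proves nothing about the birational type. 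One still has to prove minimality of $\tilde Y$ (no $(-1)$-curves), exclude the Enriques case (here the $(-3)$-curve in the exceptional chain does it, by adjunction parity), exclude rationality, produce the elliptic fibration, show there are exactly two multiple fibers, and bound their multiplicities --- this last step in \cite{K08} uses the canonical bundle formula together with torsion/fundamental-group constraints on $X/G$. As it stands your proposal proves statements (1) and (2) (after repairing (i) and (ii)) but only the numerical shell of (3) and (4).
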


Here a ${\mathbb Q}$-homology projective plane is a normal
projective surface with the same Betti numbers as $\bbP_{\bbC}^2$ (cf.
\cite{HK1}, \cite{HK2}).  A normal
projective surface with quotient singularities only is a ${\mathbb Q}$-homology projective plane if its second Betti number is 1  (if the first Betti number were positive, then, since the Picard scheme is compact for a normal surface,
looking at the Albanese map one sees that the Picard number is at least 2).
 A fake projective plane is a nonsingular
${\mathbb Q}$-homology projective plane, hence, by the invariance of the class of the canonical divisor, every quotient is
again a ${\mathbb Q}$-homology projective plane.


 \begin{lemma}\label{geodesic} On a fake projective plane, there is no totally geodesic curve, smooth or singular.
\end{lemma}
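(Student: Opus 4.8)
The plan is to pass to the normalization and exploit the restriction of the Bergman (Kähler--Einstein) metric. Suppose $C\subset X$ were a totally geodesic curve, and let $\nu\colon \tilde C\to C\subset X$ be its normalization. Since $C$ is totally geodesic, $\nu$ is a totally geodesic isometric immersion for the Bergman metric, so $\tilde C$ with the induced metric is a compact quotient of the $1$-ball; in particular $\tilde C$ is smooth of genus $\tilde g\ge 2$ (a ball quotient curve has no rational or elliptic part, in line with the fact that $X$ carries no curve of geometric genus $0$ or $1$). The strategy is to pin down $C\cdot K_X$ and $C^2$ from the geometry, dispose of the embedded case immediately, and then confront the singular case.

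First I would compute $C\cdot K_X$. Normalize the Bergman metric to constant holomorphic sectional curvature $-1$; then $X$ is K\"ahler--Einstein with $\mathrm{Ric}=-\tfrac32\,g$, so that $c_1(K_X)=\tfrac{3}{4\pi}[\omega]$, where $\omega$ is the K\"ahler form. Because $\tilde C$ is a totally geodesic complex curve, its induced Gaussian curvature equals the ambient holomorphic sectional curvature $-1$, so Gauss--Bonnet on $\tilde C$ gives $\int_{\tilde C}\nu^*\omega=4\pi(\tilde g-1)$, whence
\[ C\cdot K_X=\int_{\tilde C}\nu^*c_1(K_X)=\tfrac{3}{4\pi}\int_{\tilde C}\nu^*\omega=3(\tilde g-1). \]
(More invariantly, for every irreducible curve the Gauss equation forces the induced curvature to be $\le -1$, giving $C\cdot K_X\le 3(\tilde g-1)$ with equality exactly in the totally geodesic case.)

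Now I would invoke Picard number one. Since $\NS(X)$ has rank $1$ with torsion-free part generated by $L$, $L^2=1$, and $K_X\equiv 3L$, we get $C\equiv aL$ with $a=C\cdot K_X/3=\tilde g-1$, so $C^2=(\tilde g-1)^2\ge 1$. This settles the smooth (embedded) case at once: for a totally geodesic curve the normal bundle of $\nu$ has degree $\deg N_\nu=\deg\nu^*T_X-\deg T_{\tilde C}=-3(\tilde g-1)-(2-2\tilde g)=-(\tilde g-1)<0$, so an embedded totally geodesic curve would have $C^2=\deg N_{C/X}<0$, contradicting $C^2=(\tilde g-1)^2>0$.

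It remains to exclude \emph{singular} totally geodesic curves, and this is where I expect the real difficulty to lie. Here the numerics are self-consistent rather than contradictory: adjunction $2p_a-2=C^2+C\cdot K_X$ gives $p_a=\tfrac12\tilde g(\tilde g+1)$, so the number of nodes (self-intersections of distinct complex geodesics being transverse) is $\delta=p_a-\tilde g=\binom{\tilde g}{2}$, a nonnegative integer; indeed the case $\tilde g=2$ would be precisely an irreducible $1$-nodal curve of arithmetic genus $3$ with $C^2=1$, exactly the type studied in Lemma \ref{Fab}. Thus the singular case cannot be killed numerically, and I would exclude it structurally. Such a curve lifts to the $\Gamma$-orbit of a single totally geodesic disk $\tilde D\subset\mathbb{B}$, two of whose translates meet, i.e.\ to a cocompact Fuchsian subgroup $\Gamma'\subset\Gamma=\pi_1(X)$ together with some $\gamma\in\Gamma\setminus\Gamma'$ with $\gamma\tilde D\cap\tilde D\neq\emptyset$. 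The plan is to rule out this configuration using the explicit description of the admissible lattices in the Prasad--Yeung and Cartwright--Steger classification, showing that the Fuchsian subgroups needed to produce a self-intersecting totally geodesic curve do not occur for the arithmetic groups underlying fake projective planes; equivalently, one shows that a totally geodesic immersion into a fake projective plane is injective, thereby reducing the singular case to the already-settled embedded one. The differential-geometric proportionality above is what makes the embedded case immediate and fixes all numerical invariants of any hypothetical singular example, but the arithmetic non-existence of these subgroups is, I expect, the crux of the argument.
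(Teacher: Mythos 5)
Your differential-geometric argument correctly disposes of the \emph{embedded} case: the proportionality $K_X\cdot C=3(\tilde g-1)$ for a totally geodesic curve, combined with adjunction (equivalently your normal-bundle computation), gives $C^2=-(\tilde g-1)<0$, impossible on a surface of Picard number one. This is sound, and it is exactly the equality case of the Yau inequality that the paper itself invokes for a different purpose in Lemma~\ref{DwithC3}. You also correctly diagnose that the singular case cannot be excluded numerically: your $\tilde g=2$ example (a $1$-nodal curve of arithmetic genus $3$ with $C^2=1$, precisely the kind of curve studied in Lemma~\ref{Fab}) shows the numerics are self-consistent. But that is where your proof stops being a proof. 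For the singular case you offer only a plan --- ``show, using the Prasad--Yeung and Cartwright--Steger classification, that the needed Fuchsian subgroups do not occur'' --- and you yourself flag this as the crux. Since the lemma explicitly asserts non-existence of \emph{singular} totally geodesic curves as well, this is a genuine gap: the harder half of the statement is unproven, and nothing in your text indicates how the group-theoretic exclusion would actually be carried out.

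For comparison, the paper's proof is purely arithmetic, handles both cases at once, and uses no numerics. Step I: if an arithmetic ball quotient $X=\mathbb{B}/\Gamma$ contains a totally geodesic ball quotient $Y$ (possibly singular), the corresponding subgroup $\Gamma'<\Gamma$ lies in $G(\calO_k)\cap G'$ for a subgroup $G'<G$ (the isometry group of the smaller ball), hence is a finite-index subgroup of $G'(\calO_k)$; so $Y$ is itself \emph{arithmetic}. Step II: for a fake projective plane the lattice arises from a division algebra, and the $k$-group $\SU(h)$ is isotropic at exactly one real place and anisotropic at all others; the would-be lattice $\Gamma'$ is then contained in a compact group $\U(3)\times\cdots\times\U(3)$, hence trivial, so $Y$ is a point. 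Two things are worth noting against your plan. First, without Step I (arithmeticity of the totally geodesic subvariety) a classification-based attack cannot even start, since a priori $\Gamma'$ is just some Fuchsian subgroup of $\Gamma$. Second, the paper's Step II is uniform --- it uses only the division-algebra property common to all fake projective planes, not a case-by-case inspection of the $50$ lattices; indeed the paper remarks that when the algebra splits (as for the Cartwright--Steger surface) totally geodesic curves \emph{do} exist, so any argument that ignores the division property is bound to fail. Supplying Steps I and II would complete your proof, but at that point your embedded-case computation becomes superfluous, since the arithmetic argument does not distinguish smooth from singular curves.
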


\begin{proof}  The proof has the following two steps. We are indebted to  Bruno Klingler and Inkang Kim.

I) In general,  if an arithmetic  ball quotient $X$ contains a totally geodesic ball quotient $Y$ (possibly singular), then $Y$ is arithmetic.

This follows from the definition.
If $\Gamma < G$ is an arithmetic lattice (it means that there exists a number field $k$ such that
$\Gamma$ is commensurable with  $G(\calO_k) < G(k)$ where  $\calO_k$ is the ring of integers of $k$), one may assume that $\Gamma$ is contained in $G(\calO_k)$ up to finite index.
If the arithmetic ball quotient $X$ corresponding to $\Gamma < G$ contains a totally geodesic space $Y$ corresponding to $\Gamma' < G' <G$, then since it is totally geodesic there is an injection from $\Gamma'$ to $\Gamma$
(any loop in $\Gamma'$ cannot be contractible in $\Gamma$.)
Here $G$ is the isometry group of the complex $n$-ball and $G'$ is the isometry group of the complex $m$-ball with $m< n$.
Hence  $\Gamma' < \Gamma < G(\calO_k)$ and consequently
$\Gamma' <  G(\calO_k) \cap G' < G'(\calO_k)$.   Since $G'(\calO_k)$ gives a finite volume ball quotient, so does $\Gamma'$, i.e., $\Gamma'$ is a finite index subgroup of $G'(\calO_k)$.
Hence $\Gamma'$ is an arithmetic lattice in $G'$.

II) In the case where $X$ is obtained from a division algebra (as in the case of a FPP), such a $Y$ has to be a point.


It is known that the defining number field $k$ is totally real, and there is one real place whose real point of $G$ is $\SU(2,1)$, and at all other real places it is $\SU(3)$.
In division algebra terms we have  $(\sD, l)$, where $l$ is a quadratic extension of $k$,  the $k$-group $G$ is identified with $\SU(h)$ where $h$ is a Hermitian form on some power of $\sD$.
Hence $\SU(h)$ is isotropic at one real place of $k$, and anisotropic at all other real places of $k$.
In our case the lattice is contained in the anisotropic places, hence must be contained in a compact group
$U(3) \times U(3) \times ....\times U(3)$. Such a lattice contained in a compact group must be trivial.
In the language of quaternion algebras,
if it is a division algebra (or ramified), the group is contained in a compact group.
\end{proof}

When the central simple algebra $\sD$ splits over $l$ (as in the case of the Cartwright-Steger surface), it is a matrix algebra i.e., contained in a matrix group,
and the ball quotient always contains a totally geodesic curve, possibly singular.

 \begin{lemma}\label{DwithC3} Let $X$ be a fake projective plane. If a curve $D$ on $X$ has $D^2=1$, then it is a smooth curve of genus 3.
\end{lemma}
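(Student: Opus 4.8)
The plan is to combine the arithmetic data already extracted in Lemma \ref{Fab} with the hyperbolicity of ball quotients and, crucially, with the absence of totally geodesic curves from Lemma \ref{geodesic}. First I would record the numerics. By Lemma \ref{Fab}(1), $D$ is irreducible of arithmetic genus $p_a(D)=3$. Since $\NS(X)$ has rank $1$ with torsion-free generator $L$ satisfying $L^2=1$, and $D$ is effective with $D^2=1=L^2$, the class of $D$ is numerically equivalent to $L$; as $K_X\equiv 3L$ this gives $K_X\cdot D=3$ (equivalently, by adjunction $K_X\cdot D=2p_a(D)-2-D^2=4-1=3$).

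Next, let $\nu:\tilde D\to D$ be the normalization and write $\bar g=g(\tilde D)$ for the geometric genus. Because $X$ is Kobayashi hyperbolic it contains no curve of geometric genus $0$ or $1$, so $\bar g\ge 2$; in particular $\tilde D$ is itself hyperbolic, and $D$ is smooth precisely when $\bar g=p_a(D)=3$. The whole question is therefore to exclude $\bar g=2$.

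The key analytic input I would invoke is the Ahlfors--Schwarz / Arakelov-type inequality for the non-constant map $\nu:\tilde D\to X$ from a compact curve of genus $\ge 2$ into the $2$-ball quotient $X$. With the Bergman metric normalized to holomorphic sectional curvature $-1$, the Gauss equation forces the pulled-back metric on $\tilde D$ to have Gaussian curvature $\le -1$, and Gauss--Bonnet yields
$$ K_X\cdot D=\deg\nu^{*}K_X\ \le\ \tfrac{3}{2}\bigl(2\bar g-2\bigr)=3(\bar g-1),$$
with equality if and only if $\nu$ is a totally geodesic immersion (the constant $\tfrac{3}{2}=\tfrac{n+1}{2}$ with $n=2$ reflecting $\mathrm{Ric}=-\tfrac{3}{2}\,\om$ for this normalization, and $\deg\nu^{*}K_X=K_X\cdot D$ by the projection formula since $\nu$ is birational onto $D$).

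Finally I would run the dichotomy. Suppose $D$ is \emph{not} smooth; then $\bar g<p_a(D)=3$, so $\bar g\le 2$, while hyperbolicity gives $\bar g\ge 2$, whence $\bar g=2$. But then $K_X\cdot D=3=3(\bar g-1)$, so equality holds in the displayed inequality, forcing $\nu$ to be a totally geodesic immersion; its image $D$ is then a (necessarily singular, since $\nu$ is not injective) totally geodesic curve, contradicting Lemma \ref{geodesic}. Hence $D$ is smooth and $\bar g=p_a(D)=3$. The main obstacle is establishing, or locating a clean citation for, the curvature inequality together with its sharp equality characterization as a totally geodesic immersion, since this is exactly the bridge to Lemma \ref{geodesic}; once it is in hand the rest is purely numerical. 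I note that the delicate point of whether $\bar g=2$ might be realized by a cusp rather than a node needs no separate treatment, because the equality case already forces $\nu$ to be an immersion.
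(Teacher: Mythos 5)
Your proposal is correct and follows essentially the same route as the paper: both derive $K_X\cdot D=3$, $p_a(D)=3$ from the rank-one N\'eron--Severi group, then apply Yau's Schwarz-lemma inequality $3(2g(D')-2)\ge 2K_X\cdot D$ (your $K_X\cdot D\le 3(\bar g-1)$ is the same bound) with its equality case characterizing totally geodesic curves, and finally invoke Lemma \ref{geodesic} to force strict inequality, hence $\bar g\ge 3$ and smoothness. The only cosmetic difference is that the paper argues directly (strict inequality gives $g(D')\ge 3$) rather than by contradiction through the case $\bar g=2$.
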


\begin{proof} For a curve $C$ on a ball quotient $Z$
$$ 3(2g(C')-2)\ge 2K_ZC,$$
where $C'$ is the normalization of $C$, with equality iff $C$ is totally geodesic  \cite{Yau78}.
In our case, since $D^2=1$, we have $K_XD=3$ and $p_a(D)=3$. By Lemma \ref{geodesic} $D$ is not totally geodesic, so the above inequality implies that  $g(D')\ge 3$.

\end{proof}

 \begin{lemma}\label{restTor} Let $C$ be a smooth curve on a smooth complex surface $X$ with $C^2>0$.
Then the natural restriction map
$$Tor\Pic(X)\to \Pic(C)$$
is injective.
\end{lemma}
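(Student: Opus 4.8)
The plan is to reduce the statement to the Hodge index theorem by passing to the cyclic étale cover defined by a putative nontrivial element of the kernel. Suppose $L \in Tor\,\Pic(X)$ satisfies $L|_C \cong \mathcal{O}_C$ but $L \not\cong \mathcal{O}_X$, and let $n \geq 2$ be the exact order of $L$, so that $L^{\otimes n} \cong \mathcal{O}_X$ while $L^{\otimes i} \not\cong \mathcal{O}_X$ for $0 < i < n$. First I would form the associated cyclic étale cover $\pi \colon Y \to X$ of degree $n$, namely $Y = \Spec_X\big(\bigoplus_{i=0}^{n-1} L^{-i}\big)$ with algebra structure induced by a fixed trivialization $L^{-n}\xrightarrow{\sim}\mathcal{O}_X$. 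Since $n$ is the exact order of $L$, the surface $Y$ is connected, smooth and projective (or compact Kähler), and the Galois group $\mathbb{Z}/n$ acts on it.

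The key geometric input is the behaviour of $\pi$ over $C$. As $\pi$ is finite and flat, $\pi^{-1}(C) = Y\times_X C$ has structure sheaf $\bigoplus_{i}(L|_C)^{-i}$ as an $\mathcal{O}_C$-algebra; the hypothesis $L|_C \cong \mathcal{O}_C$ makes this the split algebra $\mathcal{O}_C[t]/(t^n - c)$ with $c \in \mathbb{C}^*$, which over $\mathbb{C}$ decomposes into $n$ pairwise disjoint copies of $C$. Writing $\pi^*C = \sum_{i=1}^{n} C_i$ for these components, disjointness gives $C_i\cdot C_j = 0$ for $i\neq j$. Because $C$ is connected and $Y\to X$ is cyclic Galois, the deck group permutes the $C_i$ transitively, so all self-intersections $C_i^2$ coincide; the projection formula $(\pi^*C)^2 = n\,C^2$ then forces $C_i^2 = C^2 > 0$ for every $i$.

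This produces on $Y$ two distinct effective curves $C_1,C_2$ with $C_1^2 = C_2^2 = C^2 > 0$ and $C_1\cdot C_2 = 0$, and I would finish with the Hodge index theorem: on a smooth projective (or compact Kähler) surface the intersection form has signature $(1,\rho-1)$, so the orthogonal complement of the class $C_1$, which has positive self-intersection, is negative definite. As $C_2$ is orthogonal to $C_1$ and is not numerically proportional to it — proportionality together with $C_1\cdot C_2 = 0$ would force $C_2\equiv 0$ — one would obtain $C_2^2 < 0$, contradicting $C_2^2 > 0$. Hence no nontrivial $L$ can lie in the kernel, and the restriction map is injective.

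I expect the only delicate point to be the identification of $\pi^{-1}(C)$ as a split (hence disconnected) cover, that is, verifying that the trivialization of $L|_C$ makes the restricted algebra split into $n$ components and that the Galois action is transitive on them; once the two disjoint curves $C_1,C_2$ of positive self-intersection are in hand, the Hodge index contradiction is immediate. Note that connectedness of $C$ is used here, but in the intended application $C = D$ is an irreducible genus-$3$ curve, so this costs nothing.
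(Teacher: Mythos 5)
Your proposal is correct and takes essentially the same route as the paper: the paper's proof also passes to the unramified (cyclic) cover of degree $d>1$ defined by a nontrivial torsion line bundle in the kernel, observes that $C$ then splits into $d$ disjoint curves $C_1,\dots,C_d$ with $C_i^2=C^2>0$, and concludes by the Hodge index theorem. Your write-up simply makes explicit the details the paper leaves implicit (taking the exact order so the cover is connected, the splitting of the restricted algebra, and deck-transitivity giving $C_i^2=C^2$).
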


\begin{proof} Let $\tau$ be a nontrivial torsion line bundle on $X$.  Then it defines an unramified cover $X'\to X$ of finite degree, say $d > 1$. If $\tau|C$ is trivial, then $C$ splits into a disjoint union of curves $C_1,...,C_d$ in $X'$ with $C_i^2=C^2>0$, contradicting the Hodge index theorem.
\end{proof}

\section{Proof of Theorem \ref{main}}

 In this section we prove Theorem \ref{main}.

First, we  state  a general result  on the first homology group of a quotient space $Y = X/G$.

Recall here (cf. 6.7 of \cite{topmethods})  that, for a $\ZZ[G]$-module $M$, the group of coinvariants $M_G$ is the quotient of $M$ by the submodule generated
by $Im (g-1)$, for $g \in G$. In particular,  $M_G$ is the quotient of $M$ modulo the relations $g_i (x) \equiv x$, for a system of generators $g_i$ of $G$.

The functor $M \mapsto M_G$ is the same as tensor product with the trivial $\ZZ[G]$-module $\ZZ$, i.e.,  $M_G = M \otimes_{\ZZ[G]} \ZZ$.
 Recall that tensor product is right exact,
and that  the left derived
functors are the homology groups $H_i(G, M)$.
In particular, $$H_1 (G, \ZZ) = G^{ab}, \ H_i(G, \ZZ[G] )= 0, \ \forall i \geq 1, H_0(G, \ZZ[G] ) = \ZZ.$$

\begin{proposition}\label{G-coinv}
Assume that $X$ is a good topological space (arcwise connected and semilocally 1-connected) and assume that the group $G$ is
 a properly  discontinuous group of homeomorphisms of $X$. Let $Y = X/G$ be the quotient space. Then:

 (I) If $G$ is generated by the stabilizer subgroups  $G_x$, then $H_1(X/G, \ZZ)$  is a quotient of the group of co-invariants $H_1(X, \ZZ)_G$,
$$ H_1(X, \ZZ)_{G} \twoheadrightarrow  H_1(X/G, \ZZ).$$

(II) More generally, if $K(X)$ is the normal subgroup generated by the stabilizer subgroups  $G_x$, then $H_1(X/G, \ZZ)$  is an extension of a  quotient of the group of co-invariants  $H_1(X, \ZZ)_G$ by the abelianization of $G/K(X)$,
 i.e. the following sequence is exact:
$$  H_1(X, \ZZ)_{G}  \rightarrow H_1(X/G, \ZZ)   \rightarrow (G/K(X))^{ab} \rightarrow 0 .$$

 (III) If $X$ is homotopically equivalent to a simplicial complex on which $G$ acts simplicially, and with only isolated  fixed points, then:

 the kernel of the homomorphism $  H_1(X, \ZZ)_{G}  \rightarrow H_1(X/G, \ZZ) $ is generated by the image of a group $H_1 (G, Z_0)$  sitting in an exact sequence:

 $$ H_2 (G, \ZZ) \rightarrow H_1 (G, Z_0) \rightarrow H_1 (G, C_0) \rightarrow  H_1 (G, \ZZ ) = G^{ab} ,$$

 where $H_1 (G, C_0)$ is the
  direct sum of groups of the form $$H_1(G, \ZZ[G/G']) \cong (G' ) ^{ab} ,$$   where $G'$ is a subgroup of $G$ (here $\ZZ[G/G']$ is just a module over the group ring $\ZZ[G]$).

 (IV) In particular, if $G$ is a finite abelian group, $G$ is generated by stabilizers, and $H_1(X, \ZZ)_G$ is a torsion  group of order relatively prime to $|G|$, then $H_1(X, \ZZ)_G = H_1 (X/G, \ZZ)$.
\end{proposition}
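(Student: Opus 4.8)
The plan is to treat (I)--(II) by covering-space theory, (III) by a chain-level computation in the homology of $G$, and then to deduce (IV) formally from (I) and (III).

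For (I) and (II) I would pass to the universal cover. Since $X$ is good it has a universal cover $\widetilde{X}$ with deck group $\Pi=\pi_1(X)$, and every homeomorphism in $G$ lifts to $\widetilde{X}$; the set of all such lifts is a properly discontinuous group $\widehat{G}$ of homeomorphisms of $\widetilde{X}$ fitting into an extension $1\to\Pi\to\widehat{G}\to G\to1$ with $\widetilde{X}/\widehat{G}=X/G$. Because $\widetilde{X}$ is simply connected, Armstrong's theorem gives $\pi_1(X/G)\cong\widehat{G}/N$, where $N$ is the normal closure in $\widehat{G}$ of all point stabilizers. Since $\Pi$ acts freely, the stabilizer of a point of $\widetilde{X}$ maps isomorphically onto the corresponding stabilizer $G_x$ in $G=\widehat{G}/\Pi$, so the image of $N$ in $G$ is exactly the normal subgroup $K(X)$ generated by the $G_x$. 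Dividing the two extensions one reads off an exact sequence of groups $\pi_1(X)\to\pi_1(X/G)\to G/K(X)\to1$. Abelianizing (which is right exact, and which kills $(g_\ast-1)$ on $H_1(X)$ because $p\circ g=p$ forces $p_\ast g_\ast=p_\ast$) then yields the exact sequence of (II), and (I) is the special case $K(X)=G$.

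For (III) I would fix, using the simplicial hypothesis, a simplicial $G$-complex $K\simeq X$; after barycentric subdivision the action is regular, and by the isolated-fixed-point hypothesis no positive-dimensional simplex has nontrivial stabilizer, so $C_n=C_n(K)$ is $\ZZ[G]$-free for $n\ge1$ while $C_0=\bigoplus_v\ZZ[G/G_v]$ runs over vertex-orbit stabilizers. By Shapiro this gives $H_1(G,C_0)=\bigoplus_v H_1(G_v)=\bigoplus_v(G_v)^{ab}$, and the displayed exact sequence of (III) is the homology long exact sequence of $0\to Z_0\to C_0\xrightarrow{\epsilon}\ZZ\to0$ with $Z_0=\ker\epsilon$. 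The heart of the matter is to identify $\ker\alpha$ for $\alpha\colon H_1(X,\ZZ)_G\to H_1(X/G,\ZZ)$. Writing $Z_1=\ker\partial_1$, $B_1=\mathrm{im}\,\partial_2$, and using connectedness to get $\partial_1\colon C_1\twoheadrightarrow Z_0$, I would read off from the long exact sequence of $0\to Z_1\to C_1\xrightarrow{\partial_1}Z_0\to0$ that the connecting map $\delta\colon H_1(G,Z_0)\to(Z_1)_G$ has image $\ker\big((Z_1)_G\to(C_1)_G\big)=(Z_1\cap I_GC_1)/I_GZ_1$, where $I_G$ is the augmentation ideal. Composing $\delta$ with the projection $(Z_1)_G\twoheadrightarrow H_1(X)_G$ defines the map $\beta$ out of $H_1(G,Z_0)$. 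Realizing all of $H_1(X)_G$, $H_1(X/G)$, and $\mathrm{im}\,\beta$ as explicit subquotients of $C_1$, the claim $\mathrm{im}\,\beta=\ker\alpha$ reduces to the single module identity $(Z_1\cap I_GC_1)+B_1=Z_1\cap(B_1+I_GC_1)$, which I would verify directly (the inclusion $\supseteq$ is the only nontrivial point: if $z\in Z_1$ equals $b+u$ with $b\in B_1$, $u\in I_GC_1$, then $u=z-b\in Z_1\cap I_GC_1$). The main obstacle is exactly this bookkeeping: checking that $\alpha$ is the map induced on coinvariants and that the three groups are compared inside the same ambient module $C_1$.

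Finally (IV) follows formally. Since $G$ is generated by stabilizers, (I) makes $\alpha$ surjective; by (III) its kernel is the image of $H_1(G,Z_0)$, and for finite $G$ each $H_i(G,-)$ with $i\ge1$ is annihilated by $|G|$, so $\ker\alpha$ is $|G|$-torsion. But $\ker\alpha\subseteq H_1(X)_G$ is torsion of order prime to $|G|$, and an abelian group that is simultaneously $|G|$-torsion and of order prime to $|G|$ is trivial; hence $\alpha$ is an isomorphism $H_1(X,\ZZ)_G\cong H_1(X/G,\ZZ)$.
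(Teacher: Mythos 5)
Your proposal is correct and follows essentially the same route as the paper: covering space theory plus Armstrong's theorem applied to the extension $1\to\pi_1(X)\to\widehat{G}\to G\to 1$ for (I)--(II), coinvariants of the simplicial chain complex together with Shapiro's lemma and the long exact homology sequence of $0\to Z_0\to C_0\to\ZZ\to 0$ for (III), and a coprimality argument for (IV). The only differences are organizational: where the paper introduces the auxiliary homology $H''$ of the coinvariant complex $C_{2,G}\to C_{1,G}\to Z_{0,G}$ and then compares it with $H_1(X/G,\ZZ)$, you compute $\ker\alpha$ directly inside $C_1$ via the identity $(Z_1\cap I_GC_1)+B_1=Z_1\cap(B_1+I_GC_1)$ (with $I_G$ the augmentation ideal), and in (IV) you invoke the annihilation of all higher group homology of a finite group by $|G|$ rather than the paper's exponent bound on $H_2(G,\ZZ)$ and the terms $(G')^{ab}$ --- both variants are sound.
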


\begin{proof}
Let $p : \tilde{X} \rightarrow X$ be the universal cover, $\pi : = \pi_1(X)$, so that $ X = \tilde{X} / \pi$.

The group $G$ (cf. 6.1 of \cite{topmethods}) admits an exact sequence
$$ 1 \rightarrow \pi \rightarrow \Ga \rightarrow  G \rightarrow 1,$$

where $\Ga$ acts propery discontinuosly on $ \tilde{X} $ and  $ Y = X / G  = \tilde{X} / \Ga$.

By the theorem of Armstrong \cite{arm1, arm2} we have that $ \pi_1(Y)  = \Ga / K$,
where $K$ is the subgroup generated by stabilizers $\Ga_z$, for $ z \in   \tilde{X} $. As $\pi$ acts freely, $\Ga_z$ maps isomorphically to the stabilizer $G_x$, if $ x = p(z)$.
Indeed, for each $ z \in p^{-1}(x)$ there is a splitting of $G_x$, and changing $z$ only changes $\Ga_z$ up to conjugation by $\pi$.

In particular, $K$ maps onto the normal subgroup $K(X)$ of $G$ generated by the stabilizers $G_x$, and we have an exact sequence
$$ 1 \rightarrow \pi / ( \pi \cap K ) \cong ( \pi K )/ K\rightarrow \pi_1 (Y) = \Ga/ K  \rightarrow  G / K(X) = \Ga / (\pi K) \rightarrow 1.$$
Set  $$H : = H_1(X, \ZZ)= \pi^{ab}, \  H' : = H_1(Y, \ZZ) = \pi_1(Y)^{ab} = ( \Ga/ K)^{ab} =  \Ga / (K [\Ga, \Ga]),$$
hence an exact sequence
$$ 1 \rightarrow  (\pi K [\Ga, \Ga]) / (K [\Ga, \Ga])   \rightarrow H'  \rightarrow  (G / K(X))^{ab} = \Ga / (\pi K [\Ga, \Ga]) \rightarrow 1.$$
The left hand side equals $ \pi / ( \pi \cap  (K  [\Ga, \Ga]))$, and is clearly a quotient of $H = \pi / [\pi, \pi]$. Moreover, for each $\ga \in \Ga, \phi \in \pi$,
we have that $\ga \phi \ga^{-1} $ and $ \phi$ have the same image in $H'$. So, for each $g \in G$, $G$ acts trivially on the image of $H$ inside the kernel of the surjection
$ H'   \rightarrow  (G / K(X))^{ab}$. So, we get an exact sequence
$$ H_G   \rightarrow H'   \rightarrow  (G / K(X))^{ab} \rightarrow 0,$$
and (I) and (II) are proven.

For (III), observe that H is the first homology group of the complex of simplicial chains in $X$
$$ C_2 \rightarrow C_1 \rightarrow Z_0,$$
where we take as $Z_0$  the group of degree zero $0$-chains.

We hence have several exact sequences, where $Z_i$ is the subgroup of $i$-cycles, $B_i$ is the group of $i$-boundaries:
$$ 0   \rightarrow Z_1 \rightarrow C_1  \rightarrow Z_0 \rightarrow 0,$$
$$ 0   \rightarrow B_1 \rightarrow Z_1  \rightarrow H \rightarrow 0,$$
$$ 0   \rightarrow Z_2 \rightarrow C_2  \rightarrow B_1 \rightarrow 0.$$

Applying the functor of coinvariants we get exact sequences:
$$ B_{1,G} \rightarrow Z_{1,G}  \rightarrow H_G \rightarrow 0,$$
$$  C_{2,G}   \rightarrow B_{1,G} \rightarrow 0,$$
$$  H_1 (G, Z_0)    \rightarrow Z_{1,G} \rightarrow C_{1,G}  \rightarrow Z_{0,G} \rightarrow 0.$$

Denote  now by $H''$  the homology of the complex
$$ C_{2,G} \rightarrow C_{1,G}  \rightarrow Z_{0,G}.$$
By what we have observed above, $H''$  is a quotient of $Z_{1,G} $ by the subgroup generated by the image of $B_{1,G}$ and by the image of $H_1 (G, Z_0) $,
hence a quotient of $H_G$ by the image of $H_1 (G, Z_0) $.

Now, by our hypothesis, for $ i \geq 1$,  $C_{i,G} = C'_i$, the group of simplicial $i$-chains on $ Y = X/G$.
This is true since $G$ acts freely on $i$-chains, for $i \geq 1$.

And the first homology group $H'$ of $X/G$ is the homology of the complex
$$ C'_2 \rightarrow C'_1 \rightarrow Z'_0.$$

Moreover, $Z_0 \rightarrow  Z'_0$
factors through $Z_{0,G}$, hence $H' / H''$ equals   the quotient of two  kernels
$$ ker (C_{1,G}  \rightarrow  Z'_0)   / ker ( C_{1,G}\rightarrow  Z_{0,G} ),$$
and we shall now again see that it is isomorphic to $(G/K(X))^{ab}$.
Indeed, if a 1-cycle on $Y$ maps to zero in $Z'_0$, then it lifts to a 1-cycle on $X$ with boundary of the form $x - g(x)$.
Adding  zero,  a path from one vertex to another minus the same path, we can take $x$ to lie on any fibre over a vertex of $Y$.
In particular, if $g \in G_x$, we get $x - g(x) = 0$, hence similarly, adding a path from $x$ to $z$, minus its transform via $g$, we get  $z - g(z) = x - g(x)= 0$ for each other vertex $z$. Finally,
since $ z = g(z) \Rightarrow  h z = g h (z) = h g (z)$ (use the fact that we work in the group of coinvariants for the second equality), we obtain that our quotient $H' / H''$
equals $(G/K(X))^{ab}$.

Finally, the module $C_0$ is a direct sum $M \oplus M'$, where $M$ is a  free module  corresponding to vertices of $X$ on which $G$ acts freely, and $M'$ is the direct sum of  modules corresponding to orbits of vertices
with a nontrivial stabilizer $G'$, hence
modules of the form $\ZZ[G/G']$.

	For the former summand $H_1 (G, M) = 0$, for the latter  the Hochshild-Lyndon-Serre spectral sequence, if $G'$ is a normal subgroup, yields $$H_1 (G, \ZZ[G/G']) = (G')^{ab}.$$
	If $G'$ is not normal, the same assertion follows from Shapiro's lemma (see \cite{brown}, 6.2,
	page 73), since the $\ZZ[G]$-module $\ZZ[G/G']$  is just the  representation of $G$ induced  from the trivial representation $\ZZ$ of $G'$.
	From the exact sequence
	$$ 0 \rightarrow Z_0 \rightarrow C_0 \rightarrow \ZZ \rightarrow 0 $$
	the exact group  homology  sequence yields an exact sequence
	$$ H_2 (G, \ZZ) \rightarrow H_1 (G, Z_0) \rightarrow H_1 (G, C_0) \rightarrow  H_1 (G, \ZZ ) = G^{ab} \rightarrow Z_{0,G} \rightarrow C_{0,G} \rightarrow \ZZ$$

(IV) follows easily once we observe  that the finite group $ H_2 (G, \ZZ) \cong  H^2 (G, \CC^*)$ has exponent dividing $|G|$ (cf. theorem 6.14 of \cite{jac2}.
\end{proof}

\begin{example}
Let $X$ be a hyperelliptic curve of genus $g$,  and $G = \ZZ/2$ acting via the hyperelliptic involution, so that $X/G = \bbP^1$. Here $H_G = (\ZZ/2)^{2g}$, while  $H_1 (X/G, \ZZ) = H_1 (\bbP^1, \ZZ) = 0$.

More generally, take $X$ be the product of two hyperelliptic curves of genera $g_1, g_2$, and $G = \ZZ/2$ acting diagonally  via the two  hyperelliptic involutions. Again the quotient is simply connected,
the locus of  fixed points has complex codimension 2,  $H_G = (\ZZ/2)^{2g_1 + 2 g_2}$.

These examples show the optimality of the results of  Proposition \ref{G-coinv}.
\end{example}

 If $G=\langle g\rangle$ is cyclic, then $g^i-1=(g-1)(g^{i-1}+\ldots +1)$, hence
$Im(g^i-1)\subset Im(g-1)$ for all $i$ and,  as already mentioned,
$$M_G=M_g:=M/Im(g-1).$$

\begin{lemma}\label{coinv=inv} (I) If a   cyclic group $G$ of order  $m$ acts on an abelian group $H$, and if $m$ is coprime to the order $|h|$ of every element  $h \in H$, then $$H_G\cong H^G.$$

(II) If a finite cyclic group $G$ acts on $X$  with only isolated fixed points, $G$ is generated by the stabilizer subgroups and  $H_1(X, \ZZ)$ is finite and has order coprime to $|G|$, then
$$H_1 (X/G, \ZZ)=H_1(X, \ZZ)_G = H_1(X, \ZZ)^G.$$
\end{lemma}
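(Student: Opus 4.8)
The plan is to prove (I) first, since (II) will then follow formally by combining (I) with Proposition \ref{G-coinv}(IV). Write $G=\langle g\rangle$ with $|G|=m$, and recall $H_G=H/\mathrm{Im}(g-1)$ while $H^G=\ker(g-1)$. The coprimality hypothesis --- that every element of $H$ has order prime to $m$ --- is exactly the statement that multiplication by $m$ is an automorphism of $H$: for each $h$ the cyclic subgroup $\langle h\rangle$ has order prime to $m$, so $m$ acts invertibly on it, and hence on all of $H$. I would exploit this to build an averaging idempotent and split $H$ canonically.

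The key steps for (I) are as follows. Let $N=1+g+\cdots+g^{m-1}\in\ZZ[G]$ be the norm element. Since $gN=N$ one gets $N^2=mN$, so $e:=\tfrac1m N$ (well defined because $m$ is invertible on $H$) is an idempotent endomorphism of $H$, yielding $H=eH\oplus(1-e)H$. I then identify the two summands. On $H^G$ the norm acts as multiplication by $m$, so $e$ acts as the identity, giving $H^G\subseteq eH$; conversely $(g-1)N=g^m-1=0$ shows $NH\subseteq H^G$, and $eH=NH$ since $m$ is invertible, whence $eH=H^G$. For the complementary summand, $1-e=\tfrac1m(m-N)=\tfrac1m\sum_{i=0}^{m-1}(1-g^i)$, and each factor $1-g^i=-(g-1)(1+g+\cdots+g^{i-1})$ lies in $(g-1)\ZZ[G]$, so $m(1-e)H\subseteq(g-1)H$. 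The conclusion is then that the norm induces an isomorphism $H_G=H/(g-1)H\xrightarrow{\sim}eH=H^G$.

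The main obstacle is the inclusion $(1-e)H\subseteq(g-1)H$; the reverse inclusion is immediate, since $(g-1)e=\tfrac1m(g-1)N=0$ gives $(g-1)H\subseteq\ker e=(1-e)H$. The difficulty is that the computation above only yields $m(1-e)H\subseteq(g-1)H$, and I must cancel the factor $m$. This is legitimate because multiplication by $m$ is an automorphism of $H$ commuting with $g-1$, hence maps $(g-1)H$ bijectively onto itself; applying $m^{-1}$ to the relation $m(1-e)h\in(g-1)H$ therefore gives $(1-e)h\in(g-1)H$. Combining the two inclusions gives $(g-1)H=(1-e)H=\ker N$, and since $N$ has image $eH$ and kernel $(g-1)H$, the first isomorphism theorem delivers the claimed isomorphism $H_G\cong H^G$.

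For (II), I would apply (I) to $H=H_1(X,\ZZ)$, whose order is prime to $|G|$ by hypothesis, obtaining $H_1(X,\ZZ)_G\cong H_1(X,\ZZ)^G$. The remaining point is to identify $H_1(X,\ZZ)_G$ with $H_1(X/G,\ZZ)$. Since $G$ is cyclic, hence abelian, acts with only isolated fixed points, and is generated by its stabilizer subgroups, and since $H_1(X,\ZZ)_G$ --- being a quotient of the finite group $H$ --- is torsion of order dividing $|H|$ and thus prime to $|G|$, all hypotheses of Proposition \ref{G-coinv}(IV) are satisfied, giving $H_1(X/G,\ZZ)=H_1(X,\ZZ)_G$. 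Chaining the two identifications yields $H_1(X/G,\ZZ)=H_1(X,\ZZ)_G=H_1(X,\ZZ)^G$, as required.
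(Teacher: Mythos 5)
Your proof is correct, and both parts land where the paper's do, but your route through (I) is packaged differently. The paper's proof and yours share the same key object --- the norm $\Tr(g)=1+g+\cdots+g^{m-1}$ --- and the same endgame: $\mathrm{Im}(\Tr(g))=H^G$, $\Ker(\Tr(g))=\mathrm{Im}(g-1)$, and the first isomorphism theorem. The difference lies in how coprimality is used. The paper never inverts $m$ globally: for each element $h$ it chooses an integer $a$ with $am\equiv 1$ mod $|h|$, showing $H^G\subseteq \mathrm{Im}(\Tr(g))$ directly (via $\Tr(g)(ah)=h$) and $\Ker(\Tr(g))\subseteq \mathrm{Im}(g-1)$ by an explicit telescoping identity. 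You instead upgrade coprimality to the statement that multiplication by $m$ is an automorphism of $H$, form the averaging idempotent $e=\Tr(g)/m$, and obtain the canonical splitting $H=eH\oplus(1-e)H=H^G\oplus(g-1)H$. This Maschke-style argument is slightly less elementary --- one must check that $1/m$ is a well-defined endomorphism commuting with the $G$-action, and perform the cancellation $m(1-e)H\subseteq(g-1)H\Rightarrow(1-e)H\subseteq(g-1)H$, which you justify correctly since multiplication by $m$ maps $(g-1)H$ bijectively onto itself --- but it buys more: an explicit $G$-equivariant direct sum decomposition of $H$, which the paper's elementwise computation never exhibits. One small imprecision: you cite $(g-1)e=0$ to conclude $(g-1)H\subseteq\Ker(e)$, whereas the literal implication requires $e(g-1)=0$; since $G$ is cyclic, $\ZZ[G]$ is commutative and $1/m$ is central, so the two coincide and nothing breaks. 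Part (II) is handled the same way in both treatments --- apply (I) and Proposition \ref{G-coinv}(IV) --- with you being somewhat more explicit than the paper in verifying that $H_1(X,\ZZ)_G$, as a quotient of a finite group of order prime to $|G|$, meets the coprimality hypothesis of (IV).
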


\begin{proof}
(I) Let $g\in G$ be a generator: $g$ has order  $m$.  Consider the trace homomorphism $\Tr(g)=1+g+g^2+...+g^{m-1}:H\to H$. Since $$(g-1)\Tr(g)=\Tr(g)(g-1)=g^m-1=0$$
we have
$$Im(\Tr(g))\subset \Ker(g-1)=H^G,\,\,\,Im(g-1)\subset \Ker(\Tr(g)).$$
We will show that both are equalities under the assumption.

If  $h\in \Ker(g-1)=H^G$, then, choosing an integer $a$ such that $am\equiv 1$ mod $|h|$, we see that $\Tr(g)(ah)=mah=h$, hence $h\in Im(\Tr(g))$.

If $h\in \Ker(\Tr(g))$, then, again by choosing a positive integer $a$ such that $am\equiv 1$ mod $|h|$, one has
$$h=-g(h)-g^2(h)-...-g^{am-1}(h)$$
$$=(g-1)(g(h)+2g^2(h)+...+(am-1)g^{am-1}(h))-(am-1)h$$
$$=(g-1)(g(h)+2g^2(h)+...+(am-1)g^{am-1}(h))\in Im(g-1).$$

(II) This follows from (I) and Proposition  \ref{G-coinv}.
\end{proof}

\begin{corollary}\label{FPP21} Let $X$ be a fake projective plane with Aut$(X)\cong C_7:C_3$. Then
   $$H_1(X, \mathbb{Z})^{C_7}\cong H_1(X, \mathbb{Z})_{C_7}\cong H_1(X/C_7, \mathbb{Z})=\pi_1(X/C_7)\cong 0\,\,{\rm or}\,\,C_2.$$  More precisely
    the $C_7$ action on  $H_1(X, \mathbb{Z})$ fixes  no 2-torsion element in the case of  $H_1(X, \mathbb{Z})\cong C_2^3,\,\, C_2^6,$ and only one in the case of  $H_1(X, \mathbb{Z})\cong C_2^4$.
\end{corollary}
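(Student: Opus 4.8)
The plan is to get the entire chain of isomorphisms from a single application of Lemma \ref{coinv=inv} with $G = C_7$, and then to extract the refined statement from the modular representation theory of $C_7$ over $\mathbb{F}_2$. First I would check the hypotheses of Lemma \ref{coinv=inv}(II) for $X$ a fake projective plane with $\Aut(X) \cong G_{21}$ and $C_7 \subset \Aut(X)$. For the three relevant pairs $H_1(X,\mathbb{Z})$ is one of $C_2^3, C_2^4, C_2^6$ by the Cartwright--Steger computation \cite{CS2}, so it is a finite $2$-group, hence of order coprime to $7$. By Theorem \ref{Keum} every nontrivial automorphism of $X$ fixes exactly $3$ isolated points, so $C_7$ acts with isolated fixed points; and a point fixed by a generator of $C_7$ is fixed by the whole group, so its stabilizer is all of $C_7$ and $C_7$ is generated by its stabilizer subgroups. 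Lemma \ref{coinv=inv}(II) then gives at once
$$H_1(X,\mathbb{Z})^{C_7} \cong H_1(X,\mathbb{Z})_{C_7} \cong H_1(X/C_7,\mathbb{Z}).$$
The last identification $H_1(X/C_7,\mathbb{Z}) = \pi_1(X/C_7) \cong 0$ or $C_2$ I would take from the Cartwright--Steger enumeration \cite{CS2} of the fundamental groups of these quotients; being abelian, the group coincides with its abelianization.

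For the refined statement I would exploit that $7$ is invertible in $\mathbb{F}_2$, so by Maschke's theorem the group algebra $\mathbb{F}_2[C_7]$ is semisimple. Since $2$ has multiplicative order $3$ modulo $7$, the factorization $x^7 - 1 = (x+1)(x^3+x+1)(x^3+x^2+1)$ over $\mathbb{F}_2$ gives $\mathbb{F}_2[C_7] \cong \mathbb{F}_2 \times \mathbb{F}_8 \times \mathbb{F}_8$, so the simple $\mathbb{F}_2[C_7]$-modules are the trivial module $\mathbb{F}_2$ and two $3$-dimensional modules $U, U'$. Viewing $M := H_1(X,\mathbb{Z})$ as an $\mathbb{F}_2[C_7]$-module, one has $M \cong \mathbb{F}_2^{\,a} \oplus U^{\,b} \oplus (U')^{\,c}$. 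The nontrivial simple modules carry no invariants, so $M^{C_7} = \mathbb{F}_2^{\,a}$; in particular $\dim_{\mathbb{F}_2} M^{C_7} = a$ while $\dim_{\mathbb{F}_2} M = a + 3(b+c) \equiv a \pmod 3$.

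Combining the two inputs finishes the argument. The first part forces $|M^{C_7}| \le 2$, i.e. $a \le 1$. When $M \cong C_2^3$ the congruence gives $a \equiv 0$, hence $a = 0$ and $M^{C_7} = 0$; when $M \cong C_2^6$ again $a \equiv 0$, so $a = 0$; when $M \cong C_2^4$ we get $a \equiv 1$, so $a = 1$ and $M^{C_7} \cong C_2$ has a single nonzero element. This is precisely the asserted behaviour of the $C_7$-action on the $2$-torsion.

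The step I expect to be the only real obstacle — everything else being forced by the general machinery of Section 3 — is pinning down the concrete value $0$ or $C_2$. The lemmas by themselves leave $a = \dim_{\mathbb{F}_2} M^{C_7}$ ambiguous (for instance $a \in \{0,3\}$ when $M \cong C_2^3$), and it is exactly the external fact $|\pi_1(X/C_7)| \le 2$ from \cite{CS2} that, through the mod-$3$ congruence, collapses this ambiguity and yields the sharp count.
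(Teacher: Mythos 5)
Your proof is correct and follows essentially the same route as the paper: the chain of isomorphisms comes from Lemma \ref{coinv=inv}(II) (whose hypotheses -- isolated fixed points, generation by stabilizers, coprimality -- you verify via Theorem \ref{Keum} more explicitly than the paper does), and the refined count comes from the factorization $x^7-1=(x+1)(x^3+x+1)(x^3+x^2+1)$ over $\mathbb{F}_2$; your Wedderburn phrasing $\mathbb{F}_2[C_7]\cong\mathbb{F}_2\times\mathbb{F}_8\times\mathbb{F}_8$ together with the congruence $a\equiv\dim_{\mathbb{F}_2}M\pmod 3$ is just a more explicit rendering of the paper's ``direct sum of subspaces of cardinality $2$ or $8$''. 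The one genuine divergence is the provenance of the bound on $H_1(X/C_7,\mathbb{Z})$: you import $\pi_1(X/C_7)\cong 0$ or $C_2$ wholesale from the Cartwright--Steger data \cite{CS2}, whereas the paper derives it structurally -- by Theorem \ref{Keum}(3) the minimal resolution of $X/C_7$ is a $(2,3)$-, $(2,4)$-, or $(3,3)$-elliptic surface, which by Dolgachev \cite{D} has cyclic fundamental group of order $\gcd(a,b)\le 3$, and the value $C_3$ is then excluded because $H_1(X/C_7,\mathbb{Z})\cong H_1(X,\mathbb{Z})_{C_7}$ is a quotient of a $2$-group; the paper cites \cite{CS2} only as a consistency check at this step. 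The paper's route buys independence from the computer enumeration of the quotients' fundamental groups (it still needs \cite{CS2} for $H_1(X,\mathbb{Z})$ itself), while yours buys brevity; either input, fed into your mod-$3$ congruence, forces the same sharp conclusion, so your argument is complete as written.
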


\begin{proof}  Recall that, by  \cite{CS2},
 the three pairs of fake projective planes with Aut$(X)\cong G_{21}$ have torsion groups
$$ H_1(X, \mathbb{Z})\cong C_2^3,\,\, C_2^4,\,\, C_2^6,$$  respectively. By Proposition  \ref{G-coinv} and Lemma \ref{coinv=inv}
$$H_1(X, \mathbb{Z})^{C_7}\cong H_1(X, \mathbb{Z})_{C_7}\cong H_1(X/C_7, \mathbb{Z}).$$
   By Theorem \ref{Keum}, since an $(a,b)$-elliptic surface has fundamental group isomorphic to the cyclic group of order $gcd(a,b)$  \cite{D}, we see that $$\pi_1(X/C_7)=\pi_1(X/C_7)^{ab}=H_1(X/C_7, \mathbb{Z})$$ is of order at most 3, hence either $0$ or $C_2$ (this coincides with the computation of $\pi_1(X/C_7)$ in \cite{CS2}.)  Since the polynomial $x^7 -1 $ in $C_2 [x]$ is the product of thee prime factors
   $ (x+1 ) (x^3 + x^2 + 1) (x^3 + x + 1)$ we see that any linear action of  $C_7$ on a vector space $C_2^n$ is a direct sum of  subspaces of cardinality $2$ or $8$.
    Thus the $C_7$ action on  $H_1(X, \mathbb{Z})$ fixes  no 2-torsion element in the case of  $H_1(X, \mathbb{Z})\cong C_2^3,\,\, C_2^6,$ and one in the case of  $H_1(X, \mathbb{Z})\cong C_2^4$.
\end{proof}

 This is no strange. In fact, $Aut(C_2^3)\cong GL(3,2)\cong PSL(2,7)$, a simple group of order 168 containing a subgroup $\cong G_{21}$.

 \begin{remark} The paper by S.-K. Yeung [A surface of maximal canonical degree, Math. Ann. 368 (2017), 1171-1189] contains wrong proofs, thus the main result is not proven at all. His proof of the base point freeness of $|K_M|$ relies on his wrong claim that
the $C_7$ action on  $H_1(X, \mathbb{Z})=C_2^4$ is trivial, which is not true by our Corollary \ref{FPP21}.
\end{remark}

\begin{lemma}\label{little} Let $F$ be a finite abelian group  of order not divisible by 9. Suppose that $F$ admits an order 3 automorphism $\sigma$ such that
the group of co-invariants  $F_{\sigma}\cong C_3$ or $0$. Then for every $t\in F$
$$t+\sigma(t)+\sigma^{2}(t)=0\,\,{\rm  in}\,\,F.$$
\end{lemma}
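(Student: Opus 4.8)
The plan is to reduce everything to a statement about the prime-to-$3$ part of $F$ and then to exploit the hypothesis on the coinvariants there. Since $\sigma$ is a group automorphism it preserves the primary decomposition $F = F_3 \oplus F'$, where $F_3$ is the $3$-Sylow subgroup and $F'$ has order coprime to $3$. Because $9 \nmid |F|$, the group $F_3$ is either $0$ or $\cong C_3$; as $\Aut(C_3) \cong C_2$ contains no element of order $3$, the restriction $\sigma|_{F_3}$ must be the identity. Hence on $F_3$ the map $\Tr(\sigma) = 1 + \sigma + \sigma^2$ is multiplication by $3$, which annihilates $F_3$. So it only remains to prove that $\Tr(\sigma)$ vanishes on $F'$.

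On $F'$ the integer $3$ is invertible in $\End(F')$, so I would introduce the idempotent $e := \tfrac13 \Tr(\sigma)$. Using $\sigma\,\Tr(\sigma) = \Tr(\sigma)$ one checks that $\Tr(\sigma)^2 = 3\,\Tr(\sigma)$, hence $e^2 = e$, and its image is exactly $(F')^{\sigma}$ (every element of the image is $\sigma$-fixed, and $e$ restricts to the identity on $(F')^{\sigma}$). Since $\Tr(\sigma)|_{F'} = 3e$ and $3$ acts invertibly on $F'$, the map $\Tr(\sigma)$ vanishes on $F'$ if and only if $(F')^{\sigma} = 0$. Thus the whole lemma comes down to showing that $(F')^{\sigma} = 0$.

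This is where the hypothesis enters. Writing $G = \langle \sigma \rangle \cong C_3$, the coinvariants split as $F_{\sigma} = (F_3)_{\sigma} \oplus (F')_{\sigma}$. On $F_3$ the action is trivial, so $(F_3)_{\sigma} = F_3$; and since $|F'|$ is coprime to $|G| = 3$, Lemma \ref{coinv=inv}(I) gives $(F')_{\sigma} \cong (F')^{\sigma}$, a group of order prime to $3$. Therefore the prime-to-$3$ part of $F_{\sigma}$ is precisely $(F')^{\sigma}$. By assumption $F_{\sigma} \cong C_3$ or $0$, so its prime-to-$3$ part is trivial, forcing $(F')^{\sigma} = 0$. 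Combined with the previous paragraph this yields $\Tr(\sigma) \equiv 0$ on all of $F$, i.e. $t + \sigma(t) + \sigma^2(t) = 0$ for every $t \in F$.

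I expect the only delicate point to be handling the two cases $F_{\sigma} \cong C_3$ and $F_{\sigma} \cong 0$ uniformly. The key observation making both work at once is that $(F')^{\sigma}$ has order coprime to $3$, so it can never account for a possible $C_3$ appearing in $F_{\sigma}$: such a factor must come entirely from $F_3$, on which $\Tr(\sigma)$ is already seen to vanish. Everything else is the routine verification of the idempotent identity $\Tr(\sigma)^2 = 3\,\Tr(\sigma)$ and the bookkeeping of the primary decomposition.
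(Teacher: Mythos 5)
Your proof is correct, but it reaches the conclusion by a genuinely different mechanism at the key step. The paper does not decompose $F$ at all in the case $F_{\sigma}=0$: it observes that $\sigma-1$ is then surjective, hence bijective on the finite group $F$, and cancels it in $0=\sigma^{3}-1=(\sigma-1)(\sigma^{2}+\sigma+1)$ to get $\sigma^{2}+\sigma+1=0$ outright; in the case $F_{\sigma}\cong C_3$ it splits $F$ into $p$-primary summands, applies the same cancellation for $p\neq 3$ (where the coinvariants vanish), and for $p=3$ argues exactly as you do ($\sigma=1$ on $F_3$, so the trace is $3=0$). You instead treat both cases uniformly through the splitting $F=F_3\oplus F'$ and, on $F'$, replace the cancellation trick by the averaging idempotent $e=\tfrac13\Tr(\sigma)$, which projects onto $(F')^{\sigma}$, and then invoke Lemma \ref{coinv=inv}(I) to convert the hypothesis $(F')_{\sigma}=0$ into $(F')^{\sigma}=0$. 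Both mechanisms are sound. The paper's cancellation is more self-contained: it requires neither the invertibility of $3$ nor the invariants/coinvariants comparison (note that the paper's proof of Lemma \ref{coinv=inv}(I) is itself a trace argument, so your route in effect reuses that machinery through the back door). What your version buys is uniformity of the two cases and the conceptual picture that, away from $3$, the trace is three times a projection onto the invariants; in fact your idempotent would let you bypass Lemma \ref{coinv=inv}(I) entirely, since $e(\sigma-1)=0$ gives $(\sigma-1)F'\subseteq \Ker(e)$, so $(F')_{\sigma}=0$ forces $F'=(\sigma-1)F'\subseteq\Ker(e)$, i.e. $e=0$ and $(F')^{\sigma}=\mathrm{Im}(e)=0$.
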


\begin{proof}  In the case $$ 0 =  F_{\sigma} = F / (\sigma -1)F$$ $1-\sigma$ is invertible hence
$$ 0 = \sigma^3 - 1 =  (\sigma -1) (\sigma^2 + \sigma +1) \Rightarrow (\sigma^2 + \sigma +1)= 0.$$

Note that the action of $\sigma$ on $F$ is the product of its actions on the $p$-primary summands of $F$.

In the second case $F_{\sigma}\cong C_3$, the previous argument applies for  $p$-primary summands of $F$ for $p\neq 3$.

For $p=3$ clearly $\sigma = 1$, hence  $(\sigma^2 + \sigma +1) = 3 = 0$.
\end{proof}

Now we prove Theorem \ref{main}.\\
Suppose that $X$ has no 3-torsion. Then $K_X$ has a unique cube root $L_0$. Then $L_0$ is fixed by every automorphism.
Let $L$ be an ample line bundle with $L^2=1$. Then $L=L_0+t$ for some torsion line bundle $t$, and
$$\sigma^*(L_0+t)=L_0+\sigma^*(t),\,\,\,\sigma^{2*}(L_0+t)=L_0+\sigma^{2*}(t).$$ By the above lemma, $t+\sigma^*(t)+\sigma^{2*}(t)=0$. Thus
$$(L_0+t)+\sigma^*(L_0+t)+\sigma^{2*}(L_0+t)=3L_0+(t+\sigma^*(t)+\sigma^{2*}(t))=3L_0=K_X.$$
If $L_0+t$ is effective, then all the three summands are effective, so is $K_X$, a contradiction. Thus $H^0(X, L_0+t)=0$.

\medskip
Suppose that $X$ has  nontrivial 3-torsion. In this case, $K_X$ has a cube root $L_0$, not unique (see Remark under Table 1).
Note that $L_0$ may not be fixed by the order 3 automorphism $\sigma$.\\
If $\sigma^*(L_0)=L_0$, then the previous argument shows that $H^0(X, L_0+t)=0$.\\
If $\sigma^*(L_0)=L_0+t_3$ for some 3-torsion $t_3$, then for any torsion line bundle $t$
$$\sigma^*(L_0+t)=L_0+t_3+\sigma^*(t),\,\,\,\sigma^{2*}(L_0+t)=L_0+2t_3+\sigma^{2*}(t).$$ By the above lemma, $t+\sigma^*(t)+\sigma^{2*}(t)=0$.
Thus
$$(L_0+t)+\sigma^*(L_0+t)+\sigma^{2*}(L_0+t)=3L_0+3t_3=K_X.$$
Since $K_X$ is not effective, none of the three summands is effective.

\section{Fake projective planes with $\Aut(X)=C_3^2$ and $H_1(X, \mathbb{Z})=C_{14}$}

This is one of the 3 pairs of fake projective planes with  $\Aut(X)=C_3^2$. The other two pairs are also listed in Table 1.
Note that the unique 2-torsion element  is fixed by every automorphism.

 If $\Aut(X)=C_3^2$ acts trivially on $H_1(X, \mathbb{Z})$, then $H_1(X, \mathbb{Z})_{\Aut(X)}=C_{14}$.
This group has order coprime to $3$, thus by Proposition \ref{G-coinv}
$H_1(X/\Aut(X), \mathbb{Z})=C_{14}$.
But $X/\Aut(X)$ has four $A_2$-singularities and its minimal resolution is a numerical Godeaux surface (a minimal surface of general type with
$p_g=0$ and $K^2=1$). This is impossible, as a numerical Godeaux surface
has a  torsion group of order   $\le 5$.  Thus, $\Aut(X)=C_3^2$ does not act trivially on $H_1(X, \mathbb{Z})$.

Note that $$ Aut(H_1(X, \mathbb{Z}))\cong (C_2\times C_7)^* \cong C_{6}.$$  Thus  we have
$$\Ker(Aut(X)\to Aut(H_1(X, \mathbb{Z}))\cong C_3.$$
If $\sigma\in \Aut(X)=C_3^2$ acts trivially on $H_1(X, \mathbb{Z})$, then by  Proposition \ref{G-coinv} and Lemma \ref{coinv=inv}
$$H_1(X/\langle\sigma\rangle, \ZZ)=H_1(X, \ZZ)_{\sigma}=H_1(X, \ZZ)^{\sigma}=C_{14}.$$
If $\sigma\in \Aut(X)=C_3^2$ does not act trivially on $H_1(X, \mathbb{Z})$, then it fixes the 2-torsion element, permutes the six 7-torsion  elements and the six 14-torsion  elements, hence
$$H_1(X/\langle\sigma\rangle, \ZZ)=H_1(X, \ZZ)_{\sigma}=H_1(X, \ZZ)^{\sigma}=C_{2}.$$
This coincides with the computation of Cartwright and Steger \cite{CS2}:
$$\pi_1(X/C_3)=C_{14},\,\, S_3,\,\,C_2,\,\, C_2$$
respectively for four order 3 subgroups of $\Aut(X)=C_3^2$ (see Table 1).

Since $X$ has no 3-torsion, it has a unique cubic root of $K_X$. Let $L_0\in\Pic(X)$ be the unique cubic root of $K_X$.

First we recall the following vanishing result from (\cite{K17}, Theorem 0.2 and its proof).

\begin{theorem}\label{K2017}\cite{K17} Let $X$ be a fake projective plane with $\Aut(X)\cong C_3^2$.
Then $H^0(X, 2L_0+t)=0$ for any $Aut(X)$-invariant torsion line bundle $t$. In particular, $H^0(X, 2L_0)=0$
\end{theorem}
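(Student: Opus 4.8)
The plan is to combine Riemann--Roch, Serre duality and the $C_3^2$-action, reducing the assertion to two vanishings, one elementary and one carrying the real content. Write $G=\Aut(X)\cong C_3^2$. Since none of the three relevant fake planes has $3$-torsion, the cube root $L_0$ of $K_X$ is unique, hence $G$-invariant; as $t$ is $G$-invariant of order prime to $3$, its linearization at any fixed point is trivial, so $2L_0+t$ carries a canonical $G$-linearization and $G$ acts on each $H^i(X,2L_0+t)$. First I would record, by Riemann--Roch,
$$\chi(X,2L_0+t)=1+\tfrac12(2L_0+t)\cdot(2L_0+t-K_X)=1+\tfrac12(2L_0)\cdot(-L_0)=0,$$
and, by Serre duality, $h^2(X,2L_0+t)=h^0(X,L_0-t)$. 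Hence it suffices to prove $h^0(X,L_0-t)=0$ and $h^0(X,2L_0+t)=0$, the latter then forcing $h^1=0$ as well since $\chi=0$.

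Second, the vanishing $h^0(X,L_0-t)=0$ I would obtain by contradiction. A nonzero section produces an effective $D\equiv L_0$ with $D^2=1$; by Lemma \ref{Fab}(1) it is unique and by Lemma \ref{DwithC3} it is a smooth genus-$3$ curve. Since $L_0-t$ is $G$-invariant and $h^0\le 1$, this $D$ is $G$-invariant, so $G=C_3^2$ acts on $D$. Now $G$ cannot fix a smooth point (a finite subgroup of $\mathrm{GL}_1$ is cyclic), and a point fixed by two distinct order-$3$ subgroups would be $G$-fixed; yet each of the four order-$3$ subgroups fixes exactly $2$ points of $D$ (the only solution of $4=3(2g''-2)+2k$ with $0\le k\le 3$, using that each order-$3$ element has but $3$ fixed points on $X$). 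These $8$ distinct points all have stabilizer exactly $C_3$, hence lie in $G$-orbits of size $3$; but $8$ is not divisible by $3$ --- a contradiction. Thus $h^2(X,2L_0+t)=0$.

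Third, and this is the heart, I would prove $h^0(X,2L_0+t)=0$ by descending to $Y=X/G$. By Theorem \ref{Keum}, $Y$ has four $\tfrac13(1,2)$-points, which are $A_2$ (Du Val) singularities, and its minimal resolution $S$ is a numerical Godeaux surface ($K_S^2=1$, $p_g=q=0$); since the singularities are canonical, $\rho^\ast K_Y=K_S$ for the resolution $\rho\colon S\to Y$. Because $t$ has trivial linearization at the fixed points, both $L_0$ and $t$ descend: $L_0=\pi^\ast M$ with $3M\equiv K_Y$ and $t=\pi^\ast t_Y$, so $2L_0+t=\pi^\ast(2M+t_Y)$. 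Decomposing $\pi_\ast\mathcal O_X$ into characters gives, for each $\chi\in\widehat G$,
$$H^0(X,2L_0+t)_\chi\cong H^0\big(S,\,N_\chi\big),\qquad N_\chi=\big\lfloor\tfrac23 K_S+E_\chi\big\rfloor+\tau_\chi,$$
where $E_\chi$ is supported on the $(-2)$-curves resolving the $A_2$-points and $\tau_\chi$ is torsion. I would then show each $N_\chi$ is non-effective: since $K_S$ is nef with $K_S^2=1$ and both $E_\chi$ and $\tau_\chi$ are $K_S$-orthogonal, one has $N_\chi\cdot K_S=\tfrac23<1$, so a nonzero effective representative could only be supported on the $(-2)$-curves, and the explicit round-down shows the class is not effective. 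Summing over $\chi$ yields $h^0(X,2L_0+t)=0$.

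The main obstacle is the bookkeeping in the third step: computing the round-downs $\lfloor \tfrac23 K_S+E_\chi\rfloor$ at the four $A_2$-configurations and pinning down the torsion twists $\tau_\chi$ precisely enough to certify non-effectivity, which requires control of $\Pic(S)$ and of the position of $M$ relative to the exceptional curves. An alternative to the third step is the equivariant holomorphic Lefschetz fixed-point formula applied to $2L_0$ at the three $\tfrac13(1,2)$-fixed points of each order-$3$ element; however, that formula computes only the eigenspace-wise difference $h^0-h^1$, so it would still have to be supplemented by a non-effectivity analysis to conclude, which is why I would keep the Godeaux descent primary.
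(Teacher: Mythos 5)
Your Steps 1 and 2 are essentially correct, but they prove only $h^2(X,2L_0+t)=h^0(X,L_0-t)=0$ (the orbit-counting argument on the invariant genus-$3$ curve is nice), which the theorem does not even ask for; the entire weight of the statement, $h^0(X,2L_0+t)=0$, rests on your Step 3, and Step 3 is built on a claim that is provably false. The cube root $L_0$ does \emph{not} descend to $Y=X/G$, and $2L_0+t$ admits no $G$-linearization at all. Indeed, the four singular points of $Y$ are Du Val ($A_2$), hence Gorenstein, so $K_Y$ is Cartier, and a Cartier divisor has integer intersection number with every curve, hence with every Weil divisor class on $Y$. If some Weil class $M$ satisfied $\pi^*M=L_0$, the projection formula would give $9\,M\cdot K_Y=L_0\cdot K_X=3$, i.e.\ $M\cdot K_Y=\tfrac13\notin\ZZ$, a contradiction; likewise, if $2L_0+t$ were $G$-linearizable, the eigensheaves of $\pi_*\calO_X(2L_0+t)$ would be Weil classes $W_\chi$ on $Y$ with $\pi^*W_\chi=2L_0+t$, forcing $W_\chi\cdot K_Y=\tfrac23$. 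So the objects $M$, $t_Y$, $N_\chi$ and the displayed eigenspace decomposition simply do not exist. Your opening assertion that $2L_0+t$ ``carries a canonical $G$-linearization'' is exactly where the error enters: invariance implies linearizability for cyclic groups, but not for $C_3^2$, whose Schur multiplier $H^2(G,\CC^*)\cong C_3$ is nontrivial, and here the obstruction class is nonzero. Your own numerics betray the problem: $N_\chi\cdot K_S=\tfrac23$ is impossible for an integral divisor class on $S$ paired with the Cartier class $K_S$. On top of all this, even inside your intended framework you explicitly leave the decisive non-effectivity ``bookkeeping'' open, so the vanishing would not be established on any reading.

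The non-integrality you ran into is not a nuisance to be engineered around; it is the actual mechanism one should exploit: if $H^0(X,2L_0+t)\neq 0$ and the system contained a $G$-invariant member $D$, then $\tfrac19\pi_*D$ would be an effective Weil divisor on $Y$ of $K_Y$-degree $\tfrac23$, which is impossible. The genuine remaining difficulty --- which your proposal never confronts --- is that, precisely because the linearization obstruction is nonzero, $G$ acts on $H^0(X,2L_0+t)$ only projectively, through a Heisenberg-type representation; such a representation has dimension divisible by $3$ and no fixed point in ${\bbP}(H^0)$, so a hypothetical $3$-dimensional $H^0$ containing no invariant divisor must be excluded by a separate argument. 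This is where the real content of the theorem lies. Note finally that the present paper does not prove this statement: it recalls it, together with its proof, from \cite{K17} (Theorem 0.2 there), so the only meaningful comparison is with that paper; on its own terms, your attempt does not reach the result.
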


\begin{remark}  Among the three pairs of fake projective planes with  $\Aut(X)\cong C_3^2$,  only the pair with $H_1(X, \mathbb{Z})=C_{14}$ has an $Aut(X)$-invariant non-trivial torsion line bundle, which corresponds to the unique 2-torsion in $H_1(X, \mathbb{Z})$.
It follows that for this pair of fake projective planes, $\mathcal{O}_X, -(L_0+t_2), -2L_0$ also form an exceptional collection.
\end{remark}

\begin{lemma}\label{} For a fake projective plane $X$ with $\Aut(X)=C_3^2$ and $H_1(X, \mathbb{Z})=C_{14}$,
$$H^0(X, L_0+t)=0$$ for any torsion element $t\in \Pic(X)$, except possibly for three 14-torsion elements that are rotated by an order 3 automorphism.
\end{lemma}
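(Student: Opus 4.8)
The plan is to split the torsion twists according to the order of $t$ and the orbit structure of the induced action, writing $\Aut(X)=\langle a,b\rangle\cong C_3^2$ with $a$ acting trivially on $H_1(X,\mathbb{Z})=C_2\times C_7$ and $b$ acting as the identity on the $C_2$-part and as multiplication by a primitive cube root of unity (i.e. $2$ or $4$ mod $7$) on the $C_7$-part, exactly as analysed at the start of this section. Throughout I will use three facts available for any effective $L_0+t$: since $(L_0+t)^2=L_0^2=1$, by Lemma \ref{Fab}(1) and Lemma \ref{DwithC3} such a divisor is a single smooth curve $D$ of genus $3$ with $h^0(L_0+t)=1$, so $D$ is the unique member of its class; and since each $g\in\Aut(X)$ is an isomorphism with $g^*L_0=L_0$, the classes in a single $\langle b\rangle$-orbit $\{L_0+t,\,L_0+b^*t,\,L_0+b^{*2}t\}$ all have the same $h^0$.

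First I dispose of $t\in C_7$ (including $t=0$): here $t+b^*t+b^{*2}t=t+2t+4t=7t=0$, so summing the orbit gives $(L_0+t)+(L_0+b^*t)+(L_0+b^{*2}t)=3L_0=K_X$; as $p_g(X)=0$ not all three summands can be effective, and since they share the same $h^0$ they all vanish, giving $H^0(L_0+t)=0$. For the $2$-torsion $t_2$, which is fixed by all of $\Aut(X)$, I invoke Theorem \ref{K2017}: an effective $D\in|L_0+t_2|$ would produce $2D\in|2L_0+2t_2|=|2L_0|$, contradicting $H^0(2L_0)=0$.

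The heart of the matter is the six $14$-torsion classes $t_2+s$, $s\in C_7\setminus\{0\}$, which $b$ splits into two orbits of three, $O_1$ and $O_2$; now $t+b^*t+b^{*2}t=3t_2=t_2\neq0$, so the orbit sum only lands in $|K_X+t_2|$ and cannot by itself exclude effectivity. Instead I argue geometrically. If $L_0+t$ is effective for some $t=t_2+s$, then by the above every class in its orbit is effective, and since $a$ fixes all torsion each of the three resulting smooth genus-$3$ curves is $a$-invariant. As $a|_D$ has order $3$ (a curve cannot lie in the finite set $\mathrm{Fix}(a)$), Riemann--Hurwitz gives $4=2g(D)-2=3(2g'-2)+2f$ with $f=|D\cap\mathrm{Fix}(a)|\le 3$, whose only solution is $g'=1$, $f=2$; so each such curve meets exactly two of the three fixed points $P_1,P_2,P_3$ of $a$. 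By Theorem \ref{Keum} no point is fixed by all of $C_3^2$, so $b$ permutes $\{P_1,P_2,P_3\}$ as a $3$-cycle, and hence the three curves of $O_1$ pass through the three pairs $\{P_i,P_j\}$, one pair each, and likewise for $O_2$. If both orbits were effective, some pair $\{P_i,P_j\}$ would lie on a curve $D\in O_1$ and a distinct curve $D'\in O_2$, forcing $D\cdot D'\ge 2$; but $D\equiv D'\equiv L_0$ gives $D\cdot D'=L_0^2=1$, a contradiction. Thus at most one $\langle b\rangle$-orbit of $14$-torsion classes can make $L_0+t$ effective, which is precisely the stated exception.

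The main obstacle is the $14$-torsion step: the orbit-sum trick fails because $t+b^*t+b^{*2}t=t_2\neq0$, forcing one into the geometry of the $a$-fixed points. The delicate points are extracting $f=2$ from Riemann--Hurwitz (which requires both $g(D)=3$ and $f\le 3$, the latter coming from Theorem \ref{Keum}) and verifying that the $3$-cycle action of $b$ makes each orbit meet every pair of fixed points exactly once, so that two simultaneously effective orbits collide in an intersection number exceeding $L_0^2=1$.
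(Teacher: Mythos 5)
Your proof is correct, and for two of the three cases it coincides with the paper's own argument: for $t$ of order dividing $7$ you both use the orbit sum $(L_0+t)+(L_0+b^*t)+(L_0+b^{*2}t)=3L_0=K_X$ together with $p_g(X)=0$, and for the $2$-torsion class you both double to land in $|2L_0+2t_2|=|2L_0|$ and quote Theorem \ref{K2017}. Where you genuinely diverge is the decisive case of the six $14$-torsion classes. The paper stays algebraic: it pairs each class with its complementary class via $(L_0+t_2+at_7)+(L_0+t_2+(7-a)t_7)=2L_0+2t_2=2L_0$, so that effectivity of one $\langle b\rangle$-orbit forces, by $H^0(X,2L_0)=0$ (Theorem \ref{K2017} again), the vanishing of $H^0$ for all three classes of the other orbit. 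You instead argue geometrically: $a$-invariance of the unique curve in each effective class (here your $a,b$ are the paper's $\nu,\sigma$), Riemann--Hurwitz on the smooth genus-$3$ curve (Lemma \ref{Fab}(1) plus Lemma \ref{DwithC3}) forcing exactly two of the three fixed points of $a$ onto each curve, the fixed-point-free $3$-cycle action of $b$ on $\mathrm{Fix}(a)$ (no common fixed point of $C_3^2$, by Theorem \ref{Keum}), and finally the contradiction $D\cdot D'\ge 2>1=L_0^2$ if curves from both orbits existed, since each orbit distributes its curves over the three pairs of fixed points. Both routes are complete and yield exactly the stated conclusion that at most one $\langle b\rangle$-orbit can be effective. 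The paper's pairing argument is considerably shorter, but it leans on the vanishing theorem \ref{K2017} a second time; your argument replaces that input by the fixed-point and intersection analysis, and as a by-product it establishes the geometric configuration (each curve of genus $3$ passing through two of the three fixed points of $\nu$, with elliptic quotient) that the paper re-derives separately in the discussion preceding Theorem \ref{7th} and in Section 6.
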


\begin{proof}
Suppose that $t=t_7$ is a 7-torsion element.  We know that there is an automorphism $\sigma \in Aut(X)$ such that $\sigma^*(t)=2t$ (by replacing it by $\sigma^2$ if $\sigma^*(t)=4t$).
Thus $$\sigma^*(L_0+t)=L_0+2t, \,\,\,\sigma^{2*}(L_0+t)=L_0+4t.$$
Since  $p_g(X)=H^0(X, K_X)=0$ and
$$(L_0+t)+(L_0+2t)+(L_0+4t)=3L_0=K_X,$$
we have $H^0(X, L_0+t)= 0$.

\medskip
Supppose that $t=t_2$ is the unique 2-torsion  element. It is fixed by every automorphism. Thus
$$(L_0+t_2)+\sigma^*(L_0+t_2)+\sigma^{2*}(L_0+t_2)=3(L_0+t_2)=K_X+t_2$$ and one cannot use the previous argument.
 But the vanishing $H^0(X, L_0+t_2)= 0$ follows from Theorem \ref{K2017}, since $2(L_0+t_2)=2L_0$ is $\Aut(X)$-invariant.

\medskip
Suppose that $H^0(X, L_0+t_2+t_7)\neq 0$ for some 7-torsion element $t_7$.
Theorem \ref{K2017} we know that there is an automorphism $\sigma \in Aut(X)$ such that $\sigma^*(t_7)=2t_7$.
Thus $$\sigma^*(L_0+t_2+t_7)=L_0+t_2+2t_7, \,\,\,\sigma^{2*}(L_0+t_2+t_7)=L_0+t_2+4t_7$$
and these two line bundles are effective.
We know that  $H^0(X, 2L_0)= 0$. Since $$(L_0+t_2+at_7)+(L_0+t_2+(7-a)t_7)=2L_0,$$ we have
$$H^0(X, L_0+t_2+6t_7)=H^0(X, L_0+t_2+5t_7)=H^0(X, L_0+t_2+3t_7)=0.$$
\end{proof}

From now on, assume that $H^0(X, L_0+t_2+t_7)\neq 0$ for some 7-torsion element $t_7$. Then $$H^0(X, L_0+t_2+t_7)\cong\mathbb{C}.$$
Let $D_1$ be the unique effective curve in the linear system, i.e.
$$D_1 \equiv L_0+t_2+t_7.$$ Define
$$D_2=\sigma^*D_1\equiv L_0+t_2+2t_7, \,\,\,D_3=\sigma^{*}D_2\equiv L_0+t_2+4t_7.$$


 There is another automorphism $\nu\in Aut(X)$ acting trivially on $H_1(X, \mathbb{Z})=C_{14}$. Then
$$\nu^*(M)=M$$ for any line bundle $M$. In particular, $$\nu^{*}(D_i)=D_i,\,\,\,i=1,2,3.$$
By Lemma \ref{DwithC3},  each $D_i$ is a smooth curve of genus 3.
Note that the intersection number $$D_iD_j=1,\,\,\,i,j=1,2,3.$$ Hence  $D_i$ and $D_j$
intersect transversally in a point $x_{ij}$.
 Then $D_1, D_2, D_3$ form a triangle with vertices $x_{ij}$. (If the three curves intersect at a point $x$, then  both $\sigma$ and $\nu$ fix $x$, impossible by Theorem \ref{Keum}.)  We know that the fixed locus of $\nu$ consists of three isolated points, so we infer that
$$Fix(\nu)=\{x_{12}, x_{23}, x_{31}\}.$$

\begin{theorem}\label{7th} Let $X$ be a FPP with $\Aut(X)=C_3^2$ and $H_1(X, \mathbb{Z})=C_{14}$. The bicanonical map $\Phi_{2,X}$ of $X$ is an embedding.
\end{theorem}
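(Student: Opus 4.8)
The plan is to apply Lemma \ref{Fab} and reduce the very-ampleness of $\Phi_{2,X}$ to a statement about the curves $D$ with $D^2=1$ that $X$ may carry. By Lemma \ref{Fab}(3), $\Phi_{2,X}$ is already an embedding away from a finite set, and by Lemma \ref{Fab}(2) any failure of separation of two points $P,Q$ forces the existence of a curve $D$ with $D^2=1$ containing both $P$ and $Q$, with $P+Q$ being the unique effective divisor in $|(K_X-D)|_D|$ on $D$. So I first invoke the preceding analysis: the lemma stated just before this theorem shows that the only possible effective curves $D$ with $D\equiv L_0+t$ are the three curves $D_1,D_2,D_3$ (with $D_i\equiv L_0+t_2+ it_7$-type classes), which occur only in the exceptional case $H^0(X,L_0+t_2+t_7)\neq 0$. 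If no such curves exist, $\Phi_{2,X}$ is automatically an embedding, so the whole problem concentrates on the triangle configuration $D_1,D_2,D_3$.

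Next I would pin down, for each $D_i$, the unique non-separated pair it could carry. By Lemma \ref{Fab}(2), on the curve $D_i$ the non-separated pair $P+Q$ is the unique divisor in the linear system of $(K_X-D_i)|_{D_i}$. Here $K_X-D_i\equiv 2L_0-t_2- it_7$, a specific torsion twist of $2L_0$ restricted to $D_i$. The key structural input is the symmetry: the order $3$ automorphism $\sigma$ permutes $D_1,D_2,D_3$ cyclically, while $\nu$ fixes each $D_i$ and has $\mathrm{Fix}(\nu)=\{x_{12},x_{23},x_{31}\}$, the three vertices of the triangle. I would use $\nu$-equivariance: since $\nu^*D_i=D_i$ and $\nu$ fixes the restricted line bundle, $\nu$ acts on the genus-3 curve $D_i$ (fixing the two vertices $x_{ij}$ lying on it), and the unique effective divisor $P+Q$ in $|(K_X-D_i)|_{D_i}|$ must be $\nu$-invariant. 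This severely constrains $P+Q$: it is either supported on $\mathrm{Fix}(\nu)\cap D_i=\{x_{ij},x_{ik}\}$, or is a $\nu$-orbit.

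The decisive step, and the one I expect to be the main obstacle, is to rule out each surviving possibility for the non-separated pair. The candidate pairs are: (i) the two vertices on $D_i$, e.g. $P+Q=x_{ij}+x_{ik}$; (ii) a genuine $\nu$-orbit of length dividing $3$; or (iii) an infinitely-near pair at a fixed point. For case (i) I would compute the class of $x_{ij}+x_{ik}$ on $D_i$ — each vertex $x_{ij}$ is the transverse intersection $D_i\cap D_j$, so $x_{ij}+x_{ik}$ represents $(D_j+D_k)|_{D_i}$ in $\Pic(D_i)$, whose class is $(2L_0 + (\text{torsion}))|_{D_i}$; I would then check whether this equals $(K_X-D_i)|_{D_i}$, and invoke Lemma \ref{restTor} (torsion restricts injectively to $D_i$ since $D_i^2=1>0$) to detect the torsion discrepancy. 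The hope is that the torsion twists $t_2,t_7$ make these classes \emph{unequal}, so that the would-be unique divisor cannot be the vertex pair, forcing a contradiction with Lemma \ref{Fab}(2)'s uniqueness and eliminating the failure of separation. Cases (ii) and (iii) should be excluded by the same torsion-bookkeeping on $\Pic(D_i)$ together with the fact that a $\sigma\nu$-symmetry, or the requirement that $P+Q$ be both $\nu$-invariant and the \emph{unique} such divisor, over-determines the configuration. Carrying out this torsion computation on each $D_i$ — tracking exactly which $7$-torsion class appears and verifying the relevant restriction is nonzero in $\Pic(D_i)$ via Lemma \ref{restTor} — is the technical heart of the argument; once every candidate pair is excluded, Lemma \ref{Fab}(3) upgrades the generically-injective $\Phi_{2,X}$ to a global embedding.
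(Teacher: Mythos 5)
Your proposal follows the paper's proof essentially step for step: reduction to the triangle $D_1,D_2,D_3$ via the preceding vanishing lemma and Lemma \ref{Fab}, the use of $\nu$-invariance (an order-$3$ automorphism acting on the degree-$2$ divisor $P+Q$ must fix both points) to force $P+Q$ to be supported on $\mathrm{Fix}(\nu)\cap D_i=\{x_{ij},x_{ik}\}$, and the exclusion of both the vertex pair and the infinitely-near cases by comparing torsion classes in $\Pic(D_i)$ via Lemma \ref{restTor}. The torsion discrepancies you hope for do come out nonzero in the paper's computation ($t_2$ for the pair $x_{12}+x_{13}$, and $t_2+5t_7$ for $2x_{12}$), so your plan is exactly the paper's argument with only the routine bookkeeping left to write out.
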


\begin{proof} If two  different points $P$ and $Q$, with $Q$ possibly infinitely near to $P$,  are not separated by the bicanonical system, then they must belong to one of  the  three curves $D_i$, say $D_1$. We know that
$P+Q$ is the unique divisor of $H^0(D_1, \mathcal{O}_{D_1}((K_X-D_{1})|D_1))$. Since $\nu$ preserves the line bundle $K_X-D_1$ and the curve $D_1$, it preserves the divisor $P+Q$. Since $\nu |D_1$ is of order 3, both $P$ and $Q$ are fixed points of $\nu |D_1$.
 If $Q$ is not  infinitely near to $P$, then  $$P+Q=x_{12}+x_{13},$$ thus
in our previous notation, we should have
$$ \hol_{D_1} (2 L_0 + 6t_7)\cong \hol_{D_1} ( D_2 + D_3) \cong \hol_{D_1} (x_{12}+x_{13})\cong \hol_{D_1}(K_X-D_1) $$
$$ \cong  \hol_{D_1} (2 L_0 + t_2 - t_7),$$ hence
$$\hol_{D_1} (t_2 ) \cong \hol_{D_1} .$$
This contradicts however Lemma \ref{restTor}.

 If $Q$ is   infinitely near to $P$, then $P+Q | D_1 =2x_{12}$ or $P+Q | D_1 =2x_{13}$. In the former case  we must have
 $$ \hol_{D_1} (2 L_0 + 4 t_7)\cong \hol_{D_1} ( 2 D_2 ) \cong \hol_{D_1} (2 x_{12} )\cong \hol_{D_1}(K_X-D_1) $$
$$ \cong  \hol_{D_1} (2 L_0 + t_2 - t_7),$$ hence
$$\hol_{D_1} (t_2 + 5 t_7) \cong \hol_{D_1} ,$$
contradicting again Lemma \ref{restTor}. The argument in the latter case is identical.
\end{proof}

 \section{Fake projective planes with $H_1(X, \mathbb{Z})=C_6$}

Among the 50 pairs of fake projective planes, exactly three pairs have $H_1(X, \mathbb{Z})=C_6$, as listed in Table 2.
Moreover they all have $\Aut(X)=C_3$. Since $\Aut(H_1)\cong C_2$ in this case, the automorphism group acts trivially on $H_1(X, \mathbb{Z})=C_6$.
For each of the 6 surfaces the previous argument shows that  $\Aut(X)$ preserves every line bundle $L$ with $L^2=1$, and there are on $X$ at most two curves with self intersection 1.
 If $X$ has only one curve  $D $ with $D^2=1$, then $\Aut(X)=C_3$ fixes 2 points on $D$ and the bicanonical map embedds away from the two points.  If $X$ has two curves  $D_1$, $D_2 $ with $D_1^2=D_2^2=D_1D_2=1$, then $\Aut(X)=C_3$ fixes three points on $D_1\cup D_2$ and the bicanonical map embeds away from the three points.

\begin{table}[ht]
\label{3fpps}
\renewcommand\arraystretch{1.5}
\caption{Fake projective planes with $H_1(X, \mathbb{Z})=C_6$}
\noindent $$
\begin{array}{|c|c|c|l|}
\hline
X& \Aut(X) & H_1(X, \mathbb{Z}) & \pi_1(X/C_3)\\
\hline\hline
(a=15, p=2, \{3\}, (D3)_3) & C_3 & C_6 & C_6\\
\hline
(\mathcal{C}18, p=3, \{2\}, (dD)_3) &C_3 & C_6 & C_6\\
\hline
(\mathcal{C}18, p=3, \{2\}, (d^2D)_3) &C_3 & C_6 & C_6\\
\hline
\end{array}
$$
\end{table}

\section{Remarks on Yeung's Paper}

In this section we point out that, due to many wrong proofs, the  main result of the paper by S.-K. Yeung [Very ampleness of the bicanonical line bundle on compact complex 2-ball quotients, Forum Math. 2017;30(2): 419-432]
is not proven at all.
On page 426 his computation $$\hat{C}=\tau^* C-E_{11}/2-E_{21}/2$$ is false (see the next sentence for a correct calculation), thus his proof of Case (b) does not stand. Consequently his proof of Case (c) and (d) does not stand either, since it is based on Case (b). His arguments also rely on another wrong claim of him that a curve $C$ with $C^2=1$, if exists, is unique. No uniqueness is guaranteed in general, as is explained in our Lemma \ref{Fab}.
 For a  Galois or non-Galois cover $p$, it may happen that $p^*p_*C$ contains a component different from $C$.  Finally his proof of Case (a), the Cartwright-Steger surface case, is not complete at all, based on too strong assumptions which should be proved. A correct proof for the Cartwright-Steger surface case was given in \cite{K18}.

\bigskip
Let $X$ be a fake projective plane with a nontrivial $C_3$-action. Suppose $X$ contains a $C_3$-invariant curve $D$ with $D^2=1$. Here is a correct calculation of the image of $D$ on the minimal resolution $\tau:Y\to X/C_3$.

Since the fixed locus of $C_3$ consists of 3 isolated points \cite{K08}, applying the Hurwitz formula to the $C_3$-action on $D$ it is easy to see that it fixes 2 points on $D$, say $x_1$ and $x_2$, as explained in the previous section, and that the quotient curve $D/C_3$ has genus $1$. The image $\bar{D}\subset X/C_3$ has self-intersection
$$\bar{D}^2=D^2/3=1/3$$
and the proper transform $D'\subset Y$ intersects exactly one of the two $(-2)$-curves $E_{i1}, E_{i2}$ that lie over the $A_2$-singular point $\bar{x_i}\in X/C_3$. One may assume $D'E_{i1}=1, D'E_{i2}=0$ for $i=1, 2$. Then a simple linear algebra calculation determines the coefficients  in the following numerical equivalence:
\beq\label{D'} D'=\tau^*\bar{D}-(2E_{11}+E_{12})/3-(2E_{21}+E_{22})/3. \eeq
Thus
\beq\label{D'^2} D'^2=\bar{D}^2-2/3-2/3=-1, \eeq which does not contradict $g(D')=g(D/C_3)=1$, because $K_Y=\tau^*K_{X/C_3}$ and $D'K_Y=(\tau^*\bar{D})(\tau^*K_{X/C_3})=\bar{D}K_{X/C_3}=(1/3)DK_X=1$.


\bigskip

{\bf Acknowledgements:} We would like to thank  Michael L\"onne for a useful suggestion concerning the proof of (III) of Proposition \ref{G-coinv}, and  Inkang Kim and Bruno Klingler for
useful email correspondence concerning the non existence of geodesic curves on FPP's.


\end{document}